\newtheorem{theorem}{Theorem}[section]
\newtheorem{lemma}[theorem]{Lemma}
\theoremstyle{definition}
\newtheorem{definition}[theorem]{Definition}
\newtheorem{remark}{Remark}
\newtheorem{algorithm}{Algorithm}
\title[An extension of hybrid method without extrapolation step to EPs] 
      {An extension of hybrid method without extrapolation step to equilibrium problems}
\author[Van Hieu Dang]{}
\subjclass{Primary: 65K10, 65K15; Secondary: 90C33.}
 \keywords{Extragradient method, equilibrium problem, hybrid method.}
 \email{dv.hieu83@gmail.com}
\thanks{$^*$ Corresponding author: dv.hieu83@gmail.com}
\begin{document}
\maketitle

\centerline{\scshape Van Hieu Dang$^*$}
\medskip
{\footnotesize
 \centerline{Department of Mathematics, Vietnam National University}
   \centerline{334 - Nguyen Trai Street, Thanh Xuan, Ha Noi, Viet Nam}
} 

\medskip
%
%
\bigskip

 \centerline{(Communicated by the associate editor name)}

\begin{abstract}
In this paper, we introduce a new hybrid algorithm for solving equilibrium problems. The algorithm combines the extragradient method and the 
hybrid (outer approximation) method.
In this algorithm, only an optimization program is solved at each iteration without the extra-steps like as in the extragradient method and the 
Armijo linesearch method. A specially constructed half-space in the hybrid method is the reason for the absence of an 
optimization program in our algorithm. The strong convergence theorem is established and several numerical experiments are implemented to illustrate 
the convergence of the algorithm and compare it with others.
\end{abstract}

\section{Introduction}\label{intro}
The equilibrium problem (EP) \cite{BO1994} which was considered as the Ky Fan inequality \cite{KF1972} is very general in the sense that it includes, as special cases, many mathematical 
models such as: variational inequalities, fixed point problems, optimization problems, Nash equilirium point problems, complementarity problems, see 
\cite{BO1994,MO1992} and the references therein. Many methods have been proposed for solving EPs 
\cite{AH2014b,BO1994,CH2005,DHM2014,H2015,M2000,MO1992,QMH2008,TT2007,JK2010}. 
The most solution approximations to EPs are often based on the resolvent of equilibrium bifunction (see, for instance \cite{CH2005}) in which 
a strongly monotone regularization equilibrium problem (REP) is solved at each iterative step. It is also called the proximal point method (PPM). This method was first introduced by 
Martinet \cite{M1970} for variational inequalities, and then it was extended by Rockafellar \cite{R1976} for finding a zero point of a monotone operator. In 2000, 
Konnov \cite{K2000} further extended PPM to Ky Fan inequalities for monotone or weakly monotone bifunctions.

A special case of EP is the variational inequality problem (VIP). The projection plays an important role in constrained optimization 
problems. The simplest method for VIPs is the gradient projection method in which only a projection on the feasible set is computed. However, in order to 
obtain the convergence, the method requires the restrictive assumption that operators are strongly (or inverse strongly) monotone. To overcome this, 
Korpelevich \cite{K1976} introduced the extragradient method (double projection method) where two metric projections onto the feasible set be implemented 
at each iteration. The convergence of the extragradient method was proved under the weaker assumption that operators are only monotone 
(even, pseudomonotone) and $L$ - Lipschitz continuous. Some extragradient-like algorithms proposed for solving VIPs can 
be found in \cite{CHW2010,CY2007,HYY2004,NT2006a,NT2006b} and the references therein.  However, the projection is only found easily if the 
constrained set has a simple structure, for instance, as balls, hyperplanes or halfspaces. So, several modifications of the extragradient method have been 
proposed in various ways \cite{CGR2011a,CGR2011,GR1984,MS2015}. For instance, the authors 
in \cite{CGR2011a} replaced the second projection onto the feasible set in the extragradient method by 
one onto a half-space and proposed the subgradient extragradient method for VIPs in Hilbert spaces.

In recent years, Korpelevich's extragradient method has been naturally
extended to EPs for monotone (more general, pseudomonotone) and Lipschitz-type continuous bifunctions and widely studied both theoretically and 
algorithmically \cite{DHM2014,H2015a,VSH2013,NSNN2015,QMH2008,SVH2011,VSN2012}. In the extended extragradient methods 
to EPs, we need to solve two strongly convex optimization programs on a closed convex constrained set (see, Algorithms 
\ref{VSH2013a}, \ref{VSH2013b} and \ref{DHM2014} in Section \ref{algor}). They are generalizations of two projections in Korpelevich's 
extragradient method. The advantage of the extragradient method is that two optimization programs are solved at each iteration which seems to be 
numerically easier than the non-linear inequality (or REP) in PPM. However, this might still be costly and affects the efficiency of the used method 
if the structure of feasible set and equilibrium bifunction are complex. Moreover, we are not aware of any modification of the extragradient method 
for EPs.

In this paper, motivated by the \textit{hybrid method without the extrapolation step} \cite{MS2015} for variational inequalities, the \textit{extragradient 
method} \cite{QMH2008} and the \textit{hybrid method}, we have proposed a new hybrid algorithm 
for solving EPs. In this algorithm, by constructing a specially cutting - halfspace in the hybrid method, we only need to solve a strongly convex optimization program 
onto the feasible set at each iteration. The absence of an optimization program in our algorithm (compare with the extragradient method) can be considered an 
improvement of the results in \cite{DHM2014,H2015a,LSV2011,VSH2013,NSNN2015,SVH2011,VSN2012}.

The remainder of the paper is organized as follows: Section $\ref{algor}$ introduce our algorithm and some related works.  In Section $\ref{pre}$, 
we collect some definitions and preliminary results used in the paper. Section $\ref{main}$ deals 
with proving the convergence of the algorithm. Some applications of our algorithm to Gato differentiable EPs and multivalued variational 
inequalities are presented in Section \ref{Appl}. 
Finally, in Section $\ref{Numer}$ we provide some numerical examples to illustrate the 
convergence of the proposed algorithm and compare it with others.
\section{Algorithm and related works} \label{algor}
Let $H$ be a real Hilbert space, $C$ be a nonempty closed convex subset of  $H$ and $f:C\times C\to \Re$ be a bifunction with $f(x,x)=0$ for all $x\in C$. 
The equilibrium problem (EP) for the bifunction $f$ on $C$ is to find $x^*\in C$ such that
\begin{equation}\label{eq:EP}
f(x^*,y)\ge 0,~\forall y\in C.
\end{equation}
The solution set of EP $(\ref{eq:EP})$ is denoted by $EP(f,C)$. In this paper, we introduce 
the following hybrid algorithm for solving EP $(\ref{eq:EP})$.
\begin{algorithm}[An extended hybrid algorithm without extrapolation step]\label{H2015}
$$
\left \{
\begin{array}{ll}
y_{n+1}=  \underset{y\in C}{\rm argmin} \{ \lambda f(y_n, y) +\frac{1}{2}||x_n-y||^2\},\\
x_{n+1}=P_{C_n\cap Q_n}(x_0),
\end{array}
\right.
$$
\end{algorithm}
\noindent where $C_n,Q_n$ are two specially constructed half-spaces (see Algorithm $\ref{algor1}$ in Section $\ref{main}$ below). 

In the special case, $f(x,y)=\left\langle A(x),y-x\right\rangle$ where $A:C\to H$ is a nonlinear operator 
then EP becomes the following variational inequality problem (VIP): Find $x^*\in C$ such that
\begin{equation}\label{VIP}
\left\langle A(x^*),y-x^*\right\rangle\ge 0,~\forall y\in C.
\end{equation}
Then, our algorithm (Algorithm $\ref{H2015}$) becomes the following \textit{hybrid algorithm without extrapolation step} which was introduced 
in \cite{MS2015} for VIPs.
\begin{algorithm}[The hybrid algorithm without extrapolation step]\label{MS2015}
$$
\left \{
\begin{array}{ll}
y_{n+1}=  P_C\left(x_n-\lambda A(y_n)\right),\\
x_{n+1}=P_{C_n\cap Q_n}(x_0).
\end{array}
\right.
$$
\end{algorithm}
In 2008, Quoc et al. \cite{QMH2008} extended Korpelevich's extragradient method to EPs in Euclidean spaces in which 
two optimization programs are solved at each iteration. Recently, Nguyen et al. \cite{NSNN2015} also have done in that direction 
and proposed the general extragradient method which consists of solving three optimization programs on the feasible set. In Euclidean spaces, 
the convergence of the sequences generated by the extragradient methods \cite{NSNN2015,QMH2008} was proved under 
the assumptions of pseudomonotonicity and Lipschitz-type continuity of equilibrium bifunctions. The problem which arises in 
infinite dimensional Hilbert spaces is how to design an algorithm which provides the strong convergence.
In 2012, Vuong et al. \cite{VSN2012} used the extragradient method in \cite{QMH2008} and the hybrid (outer approximation) method 
to obtain the following strong convergence hybrid algorithm 
\begin{algorithm}\label{VSH2013a}
$$
\left \{
\begin{array}{ll}
y_n=  \underset{y\in C}{\rm argmin} \{ \lambda f(x_n, y) +\frac{1}{2}||x_n-y||^2\},\\ 
z_n = \underset{y\in C}{\rm argmin}\{ \lambda f(y_n, y) +\frac{1}{2}||x_n-y||^2\},\\
C_{n}=\left\{z\in C:||z_n-z||^2\le||x_n-z||^2\right\},\\
Q_n=\left\{z\in C:\left\langle x_0-x_n,z-x_n\right\rangle\le 0\right\},\\
x_{n+1}=P_{C_n\cap Q_n}(x_0).
\end{array}
\right.
$$
\end{algorithm}
\noindent In 2013, another hybrid algorithm \cite[Algorithm 1]{VSH2013} was also proposed in this direction as
\begin{algorithm}\label{VSH2013b}
$$
\left \{
\begin{array}{ll}
y_n=  \underset{y\in C}{\rm argmin} \{ \lambda f(x_n, y) +\frac{1}{2}||x_n-y||^2\},\\ 
z_n = \underset{y\in C}{\rm argmin}\{ \lambda f(y_n, y) +\frac{1}{2}||x_n-y||^2\},\\
C_{n+1}=\left\{z\in C_n:||z_n-z||^2\le||x_n-z||^2\right\},\\
x_{n+1}=P_{C_{n+1}}(x_0)
\end{array}
\right.
$$
\end{algorithm}
\noindent The authors in \cite{VSH2013,VSN2012} proved that the sequences $\left\{x_n\right\}$ generated by Algorithms $\ref{VSH2013a}$ 
and $\ref{VSH2013b}$ converges strongly to  $P_{EP(f,C)}(x_0)$. Note that, the set $C_{n+1}$ in Algorithm $\ref{VSH2013b}$, in general, 
is not easy to construct. 

In 2014, in order to avoid the condition of the Lipschitz-type continuity the bifunction $f$, Dinh et al. \cite{DHM2014} replaced the second 
optimization problem in the extragradient method by the Armijo linesearch technique and obtained the following hybrid algorithm
\begin{algorithm}\label{DHM2014}
$$
\left \{
\begin{array}{ll}
y_n=  \underset{y\in C}{\rm argmin} \{ \lambda f(x_n, y) +\frac{1}{2}||x_n-y||^2\},\\ 
m_n~ \mbox{\rm is the smallest integer number such that}\\
\begin{cases}
z_n= (1-\eta^{m_n})x_n+\eta^{m_n}y_n,\\ 
f( z_n,y_n )+\frac{1}{2\lambda}||x_n-y_n||^2\le 0,
\end{cases}\\
u_n=P_C(x_n-\sigma_n g_n),\\
C_{n}=\left\{z\in C:||u_n-z||^2\le||x_n-z||^2\right\},\\
Q_n=\left\{z\in C:\left\langle x_0-x_n,z-x_n\right\rangle\le 0\right\},\\
x_{n+1}=P_{C_n\cap Q_n}(x_0),
\end{array}
\right.
$$
\end{algorithm}
\noindent where $g_n\in \partial f_2(z_n,z_n)$ and $\sigma_n=-\eta^{m_n}f(z_n,y_n)/(1-\eta^{m_n})||g_n||^2$. 
Arcording to Algorithm $\ref{DHM2014}$, we still have to solve an optimization program on $C$ for $y_n$, find an optimization direction for $z_n$ 
and compute a projection onto $C$ for $u_n$ at each step. We emphasize that the projection $P_{C_n\cap Q_n}(x_0)$ in Algorithms $\ref{VSH2013a}$, $\ref{VSH2013b}$ and $\ref{DHM2014}$ deals with 
the constrained set $C$, while the sets $C_n$ and $Q_n$ in Algorithm 
$\ref{H2015}$ are two half-spaces, and so $x_{n+1}$ can be expressed by an explicit formula (see, for instance \cite{CH2005,SS2000}).
\section{Preliminaries}\label{pre}
In this section, we recall some definitions and results for further use. Let $C$ be a nonempty closed convex subset of a real Hilbert space $H$. 
We begin with some concepts of the monotonicity of a bifunction (see \cite{BO1994,MO1992} for more details).
\begin{definition} A bifunction $f:C\times C\to \Re$ is said to be
\begin{itemize}
\item [$\rm i.$] strongly monotone on $C$ if there exists a constant $\gamma>0$ such that
$$ f(x,y)+f(y,x)\le -\gamma ||x-y||^2,~\forall x,y\in C; $$
\item [$\rm ii.$] monotone on $C$ if 
$$ f(x,y)+f(y,x)\le 0,~\forall x,y\in C; $$
\item [$\rm iii.$] pseudomonotone on $C$ if 
$$ f(x,y)\ge 0 \Longrightarrow f(y,x)\le 0,~\forall x,y\in C;$$
\item [$\rm iv.$] Lipschitz-type continuous on $C$ if there exist two positive constants $c_1,c_2$ such that
$$ f(x,y) + f(y,z) \geq f(x,z) - c_1||x-y||^2 - c_2||y-z||^2, ~ \forall x,y,z \in C.$$
\end{itemize}
\end{definition}
From the definitions above, it is clear that a strongly monotone bifunction is monotone and a monotone bifunction is pseudomonotone, i.e., 
$i.\Longrightarrow ii. \Longrightarrow iii.$ For solving EP $(\ref{eq:EP})$, we assume that the bifunction $f$ satisfies the following conditions:
\begin{itemize}
\item[(A1).] $f$ is pseudomonotone on $C$ and $f(x,x)=0$ for all $x\in C$;
\item [(A2).]  $f$ is Lipschitz-type continuous on $C$ with the constants $c_1,c_2$;
\item [(A3).]   $f$ is weakly continuous on $C\times C$;
\item [(A4).]  $f(x,.)$ is convex and subdifferentiable on $C$  for every fixed $x\in C.$
\end{itemize}  
It is easy to show that under the 
assumptions $\rm (A1), (A3),(A4)$, the solution set $EP(f,C)$ of EP $(\ref{eq:EP})$  is closed and convex 
(see, for instance \cite{QMH2008}). In this paper, we assume that the solution set $EP(f,C)$ is nonempty.

The metric projection $P_C:H\to C$ is defined by
\begin{equation*}\label{eq:1.3}
P_C x=\arg\min\left\{\left\|y-x\right\|:y\in C\right\}.
\end{equation*}
Since $C$ is nonempty, closed and convex, $P_Cx$ exists and is unique. It is also known that $P_C$ has the following characteristic properties, 
see \cite{GR1984} for more details.
\begin{lemma}\label{lem.PropertyPC}
Let $P_C:H\to C$ be the metric projection from $H$ onto $C$. Then
\begin{itemize}
\item [$(i)$] $P_C$ is firmly nonexpansive, i.e.,
\begin{equation*}\label{eq:FirmlyNonexpOfPC}
\left\langle P_C x-P_C y,x-y \right\rangle \ge \left\|P_C x-P_C y\right\|^2,~\forall x,y\in H.
\end{equation*}
\item [$(ii)$] For all $x\in C, y\in H$,
\begin{equation}\label{eq:ProperOfPC}
\left\|x-P_C y\right\|^2+\left\|P_C y-y\right\|^2\le \left\|x-y\right\|^2.
\end{equation}
\item [$(iii)$] $z=P_C x$ if and only if 
\begin{equation}\label{eq:EquivalentPC}
\left\langle x-z,z-y \right\rangle \ge 0,\quad \forall y\in C.
\end{equation}
\end{itemize}
\end{lemma}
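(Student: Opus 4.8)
The plan is to establish the variational characterization $(iii)$ first, since both the firm nonexpansiveness $(i)$ and the inequality $(ii)$ follow from it by elementary algebra. To prove $(iii)$, I would exploit that $P_C x$ is by definition the unique minimizer over the convex set $C$ of the convex function $\phi(y)=\|y-x\|^2$. For the forward direction, fix $z=P_C x$ and any $y\in C$; by convexity of $C$ the point $z_t=z+t(y-z)$ lies in $C$ for every $t\in[0,1]$, so $\|z_t-x\|^2\ge\|z-x\|^2$. Expanding $\|z+t(y-z)-x\|^2=\|z-x\|^2+2t\langle z-x,y-z\rangle+t^2\|y-z\|^2$, subtracting $\|z-x\|^2$, dividing by $t>0$ and letting $t\to 0^+$ yields $\langle z-x,y-z\rangle\ge 0$, which is exactly $\langle x-z,z-y\rangle\ge 0$. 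For the converse, if $z\in C$ satisfies $\langle x-z,z-y\rangle\ge 0$ for all $y\in C$, then for any $y\in C$ I would write $\|y-x\|^2=\|(y-z)+(z-x)\|^2=\|y-z\|^2+2\langle y-z,z-x\rangle+\|z-x\|^2\ge\|z-x\|^2$, the cross term being nonnegative by hypothesis; hence $z$ minimizes $\phi$ over $C$, i.e. $z=P_C x$.

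Granting $(iii)$, firm nonexpansiveness $(i)$ is obtained by writing the characterization at the two points $x$ and $y$. Setting $u=P_C x$ and $v=P_C y$, and taking the admissible test point $v\in C$ in the inequality for $u$ and the test point $u\in C$ in the inequality for $v$, I get $\langle x-u,u-v\rangle\ge 0$ and $\langle y-v,v-u\rangle\ge 0$. Adding these and regrouping gives $\langle (x-y)-(u-v),u-v\rangle\ge 0$, which rearranges to $\langle x-y,u-v\rangle\ge\|u-v\|^2$, i.e. the claimed inequality $\langle P_C x-P_C y,x-y\rangle\ge\|P_C x-P_C y\|^2$.

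For $(ii)$, fix $x\in C$ and $y\in H$ and set $v=P_C y$. Expanding $\|x-y\|^2=\|(x-v)+(v-y)\|^2=\|x-v\|^2+2\langle x-v,v-y\rangle+\|v-y\|^2$ reduces the claim to showing that the cross term $\langle x-v,v-y\rangle$ is nonnegative. Applying $(iii)$ to $v=P_C y$ with the admissible test point $x\in C$ gives $\langle y-v,v-x\rangle\ge 0$, and since $\langle x-v,v-y\rangle=\langle y-v,v-x\rangle$, this is precisely what is needed.

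I do not anticipate a genuine obstacle here: the only conceptual step is the first-order optimality argument underpinning $(iii)$, and once that is in hand parts $(i)$ and $(ii)$ are purely algebraic consequences of it together with the bilinearity of the inner product. The existence and uniqueness of $P_C x$ are already recorded immediately before the statement, so I would take them as given rather than reprove them.
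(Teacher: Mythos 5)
Your proof is correct in all details: the first-order optimality argument for the forward direction of $(iii)$, the expansion $\|y-x\|^2=\|y-z\|^2+2\langle y-z,z-x\rangle+\|z-x\|^2$ for the converse, and the purely algebraic derivations of $(i)$ and $(ii)$ from $(iii)$ all go through exactly as you describe. One point of comparison is moot, however: the paper does not prove this lemma at all --- it states it as a known property of the metric projection and refers the reader to the literature (the Goebel--Reich monograph cited as \cite{GR1984}). So your proposal is a self-contained derivation of a fact the paper simply imports; the route you take (establish the variational characterization $(iii)$ first, then obtain firm nonexpansiveness $(i)$ by adding the two characterizing inequalities at $x$ and $y$, and obtain $(ii)$ by expanding $\|x-y\|^2$ around $P_C y$ and killing the cross term) is the standard textbook argument, and it is the natural ordering: $(iii)$ is the only step requiring an idea, and $(i)$, $(ii)$ are bilinearity bookkeeping. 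Your decision to take existence and uniqueness of $P_C x$ as given is also consistent with the paper, which records those facts immediately before the statement.
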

Let $g:C\to \Re$ be a function. The subdifferential of $g$ at $x$ is defined by
\begin{equation*}\label{eq:Subdifferential}
\partial g(x)=\left\{ w\in H: g(y)-g(x)\ge \left\langle w, y-x\right\rangle,~\forall y\in C\right\}.
\end{equation*}
We recall that the normal cone of $C$ at $x\in C$ is defined by
\begin{equation*}\label{eq:NormalCone}
N_C(x)=\left\{w\in H:\left\langle w,y-x  \right\rangle \le 0,~\forall y\in C\right\}.
\end{equation*}
\begin{definition}[Weakly lower semicontinuity]
A function $\varphi: H\to \Re$ is called weakly lower semicontinuous at $x\in H$ if for any sequence $\left\{x_n\right\}$ in $H$ converges weakly to $x$ then 
$$\varphi(x)\le \lim\inf_{n\to\infty}\varphi(x_n). $$
\end{definition}
It is well-known that the functional $\varphi(x):=||x||^2$ is convex and weakly lower semicontinuous. Any Hilbert space has the Kadec-Klee property (see, 
for instance \cite{GK1990}), i.e., if $\left\{x_n\right\}$ is a sequence in $H$ such that $x_n\rightharpoonup x$ 
and $||x_n||\to ||x||$ then $x_n\to x$ as $n\to\infty$.

Finally, we have the following technical lemma.
\begin{lemma}\cite{MS2015}\label{lem.technique}
Let $\left\{\alpha_n\right\}$, $\left\{\beta_n\right\}$, $\left\{\gamma_n\right\}$ be nonnegative real sequences, 
$\alpha,~\beta \in \Re$ and for all $n\ge 0$ the following inequality holds
$$ \alpha_n\le \beta_n+\beta \gamma_n-\alpha \gamma_{n+1}. $$
If $\sum_{n=0}^\infty \beta_n <+\infty$ and $\alpha>\beta\ge 0$ then $\lim_{n\to\infty}\alpha_n=0$.
\end{lemma}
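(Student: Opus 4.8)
The plan is to sum the hypothesized inequality over $n$ and exploit the near-telescoping of the $\gamma$-terms, rather than to analyze the recursion governing $\gamma_n$ directly. The only facts I will use are nonnegativity of the three sequences, summability of $\{\beta_n\}$, and the strict inequality $\alpha>\beta\ge 0$, which in particular forces $\alpha>0$.

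First I would fix $N\ge 0$ and add the inequality $\alpha_n\le \beta_n+\beta\gamma_n-\alpha\gamma_{n+1}$ from $n=0$ to $n=N$, obtaining
$$\sum_{n=0}^N \alpha_n \le \sum_{n=0}^N \beta_n + \beta\sum_{n=0}^N \gamma_n - \alpha\sum_{n=0}^N \gamma_{n+1}.$$
Next I would reindex the last sum as $\sum_{n=1}^{N+1}\gamma_n$ and combine it with $\beta\sum_{n=0}^N\gamma_n$. The overlapping indices $n=1,\dots,N$ then collect a coefficient $\beta-\alpha$, while the boundary terms $\beta\gamma_0$ and $-\alpha\gamma_{N+1}$ are left over, so that
$$\sum_{n=0}^N \alpha_n \le \sum_{n=0}^N\beta_n + \beta\gamma_0 + (\beta-\alpha)\sum_{n=1}^N\gamma_n - \alpha\gamma_{N+1}.$$

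Here the hypotheses enter decisively. Because $\alpha>\beta$ and each $\gamma_n\ge 0$, the term $(\beta-\alpha)\sum_{n=1}^N\gamma_n$ is $\le 0$, and since $\alpha>0$ the term $-\alpha\gamma_{N+1}$ is also $\le 0$. Dropping both of these nonpositive quantities and invoking $\sum_{n=0}^\infty\beta_n<+\infty$, I obtain the uniform bound $\sum_{n=0}^N \alpha_n \le \sum_{n=0}^\infty \beta_n + \beta\gamma_0 < +\infty$, valid for every $N$. Since the $\alpha_n$ are nonnegative, their partial sums are nondecreasing and bounded above, hence the series $\sum_{n=0}^\infty\alpha_n$ converges; consequently its general term satisfies $\alpha_n\to 0$, which is the assertion.

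The only delicate point — and thus the main obstacle — is the sign bookkeeping after reindexing: one must check that the coefficient on the telescoped middle sum is exactly $\beta-\alpha$, negative by hypothesis, and that the surviving boundary contribution $\beta\gamma_0$ is finite and harmless, which follows from $\beta\ge 0$ and $\gamma_0\ge 0$. The strict condition $\alpha>\beta$ is used precisely to discard $(\beta-\alpha)\sum\gamma_n$. An alternative route would rewrite the inequality as $\gamma_{n+1}\le(\beta/\alpha)\gamma_n+\beta_n/\alpha$, a contraction-type recursion with factor $\beta/\alpha\in[0,1)$, take $\limsup$ to deduce $\gamma_n\to 0$, and then squeeze $\alpha_n$ between $0$ and $\beta_n+\beta\gamma_n$; but the summation argument above is shorter and avoids any $\limsup$ manipulation.
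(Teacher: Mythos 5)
Your proof is correct and self-contained, which is more than the paper itself offers: the paper states this lemma as a quotation from \cite{MS2015} and reproduces no proof, so there is no in-paper argument to compare against. Your bookkeeping checks out: after reindexing, $\beta\sum_{n=0}^{N}\gamma_n-\alpha\sum_{n=1}^{N+1}\gamma_n=\beta\gamma_0+(\beta-\alpha)\sum_{n=1}^{N}\gamma_n-\alpha\gamma_{N+1}$, and the two discarded terms are nonpositive precisely because $\gamma_n\ge 0$, $\alpha>\beta$, and $\alpha>\beta\ge 0$ forces $\alpha>0$. The resulting uniform bound $\sum_{n=0}^{N}\alpha_n\le\sum_{n=0}^{\infty}\beta_n+\beta\gamma_0$ shows that $\sum_{n=0}^{\infty}\alpha_n$ converges, which is strictly stronger than the stated conclusion $\alpha_n\to 0$; retaining rather than discarding the $(\beta-\alpha)$-term would even give $\sum_{n=0}^{\infty}\gamma_n<\infty$ as a byproduct. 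The alternative you sketch at the end---reading the hypothesis as the contraction-type recursion $\gamma_{n+1}\le(\beta/\alpha)\gamma_n+\beta_n/\alpha$ and then squeezing $\alpha_n$---is closer in spirit to the argument in the cited source \cite{MS2015}; note only that the $\limsup$ step there requires first checking that $\{\gamma_n\}$ is bounded, a preliminary your summation route avoids entirely.
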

\section{Convergence analysis}\label{main}
\setcounter{theorem}{0}
\setcounter{remark}{0}
\setcounter{corollary}{0}
\setcounter{algorithm}{0}
In this section, we present our algorithm for more details and prove its convergence.
\begin{algorithm}[An extended hybrid algorithm without extrapolation step]\label{algor1}
\noindent \textbf{Initialization.} Chose $x_0=x_1 \in H, ~y_0=y_1\in C$ and set $C_0=Q_0=H$. The parameters $\lambda$ and $k$ satisfy the following 
conditions
$$0< \lambda <\frac{1}{2(c_1+c_2)},~ k>\frac{1}{1-2\lambda(c_1+c_2)}.$$
\textbf{Step 1.} Solve a strongly convex program
$$
y_{n+1}=  \underset{y\in C}{\rm argmin} \{ \lambda f(y_n, y) +\frac{1}{2}||x_n-y||^2\}.
$$
If $y_{n+1}=y_n=x_n$ then stop. \\
\textbf{Step 2.} Compute $x_{n+1}=P_{C_n\cap Q_n}(x_0),$
where 
\begin{eqnarray*}
&&C_n=\left\{z\in H: ||y_{n+1} - z||^2\leq ||x_n-z||^2+\epsilon_n \right\},\\
&&Q_n=\left\{z\in H: \left\langle x_0-x_n,z-x_n\right\rangle\le 0\right\},
\end{eqnarray*}
and $\epsilon_n=k||x_n-x_{n-1}||^2+2\lambda c_2||y_n-y_{n-1}||^2-(1-\frac{1}{k}-2\lambda c_1)||y_{n+1}-y_{n}||^2$. 
Set $n:=n+1$ and go back \textbf{Step 1.}
\end{algorithm}
We have the following result.
\begin{lemma}\label{lem2}
If Algorithm $\ref{algor1}$ finishes at the iteration step $n<\infty$, then $x_n\in EP(f,C)$.
\end{lemma}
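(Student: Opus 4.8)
The plan is to read off the first-order optimality condition for the strongly convex program defining $y_{n+1}$ in \textbf{Step 1}, and then feed in the stopping condition $y_{n+1}=y_n=x_n$ together with the identity $f(x,x)=0$ from (A1) and the subdifferentiability of $f(y_n,\cdot)$ from (A4).

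First I would note that $y_{n+1}$ minimizes $\phi(y):=\lambda f(y_n,y)+\frac{1}{2}\|x_n-y\|^2$ over $C$. Since the quadratic term is smooth with gradient $y-x_n$ and $f(y_n,\cdot)$ is convex and subdifferentiable on $C$, the optimality condition for a constrained minimizer is $0\in \lambda\,\partial f_2(y_n,y_{n+1})+(y_{n+1}-x_n)+N_C(y_{n+1})$. Thus there exist $w\in\partial f_2(y_n,y_{n+1})$ and $\bar w\in N_C(y_{n+1})$ with $\lambda w+(y_{n+1}-x_n)=-\bar w$. Since $\langle\bar w,\,y-y_{n+1}\rangle\le 0$ for every $y\in C$ by the definition of $N_C$, this gives the variational inequality
$$\langle \lambda w+(y_{n+1}-x_n),\,y-y_{n+1}\rangle\ge 0,\qquad\forall y\in C.$$

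Next I would substitute the stopping rule. As $y_{n+1}=x_n$, the term $y_{n+1}-x_n$ vanishes, and dividing by $\lambda>0$ leaves $\langle w,\,y-y_{n+1}\rangle\ge 0$ for all $y\in C$. Because $w\in\partial f_2(y_n,y_{n+1})$, the subdifferential inequality yields $f(y_n,y)-f(y_n,y_{n+1})\ge\langle w,\,y-y_{n+1}\rangle\ge 0$ for all $y\in C$. Since $y_{n+1}=y_n$, assumption (A1) gives $f(y_n,y_{n+1})=f(y_n,y_n)=0$, so $f(y_n,y)\ge 0$ for all $y\in C$; finally $y_n=x_n$ turns this into $f(x_n,y)\ge 0$ for every $y\in C$, that is, $x_n\in EP(f,C)$.

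The only step requiring care is the optimality condition itself, namely the sum rule $\partial\phi(y_{n+1})=\lambda\,\partial f_2(y_n,y_{n+1})+(y_{n+1}-x_n)$ and the inclusion $0\in\partial\phi(y_{n+1})+N_C(y_{n+1})$ for the constrained minimum. Both are standard (Moreau--Rockafellar) because the added quadratic is finite and differentiable everywhere, so once it is in place the remainder is a direct substitution using the stopping condition and $f(x,x)=0$, with no genuine analytic difficulty.
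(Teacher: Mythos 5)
Your proof is correct. The paper disposes of this lemma differently and much more briefly: substituting the stopping condition $y_{n+1}=y_n=x_n$ into Step 1, it observes that $x_n=\arg\min\{\lambda f(x_n,y)+\frac{1}{2}\|x_n-y\|^2:\ y\in C\}$, i.e., $x_n$ is a fixed point of the proximal (auxiliary-problem) mapping, and then simply invokes Proposition 2.1 of \cite{M2000}, which states precisely that such fixed points solve EP $(\ref{eq:EP})$. What you have done is re-derive the needed direction of that cited proposition: your chain --- the optimality condition $0\in\lambda\,\partial_2 f(y_n,y_{n+1})+(y_{n+1}-x_n)+N_C(y_{n+1})$ (the paper's Lemma~\ref{lem.Equivalent_MinPro} together with the Moreau--Rockafellar sum rule, which applies because the quadratic term is finite and continuous on all of $H$), then the subgradient inequality, then the stopping rule --- is exactly the derivation of inequality $(\ref{eq:1})$ in the paper's proof of Lemma~\ref{lem1}, specialized to the terminating iterate, where $y_{n+1}-x_n=0$ kills the inner-product term and $f(y_n,y_{n+1})=f(y_n,y_n)=0$ by (A1) removes the remaining one. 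So your route buys self-containedness (no appeal to \cite{M2000}) at the cost of a few lines, and it reuses machinery the paper develops anyway for its central Lemma~\ref{lem1}; the paper's version buys brevity by citation. One cosmetic remark: to conclude $x_n\in EP(f,C)$ you also need $x_n\in C$, which is immediate since $x_n=y_{n+1}$ and $y_{n+1}$ is produced by a minimization over $C$, but it deserves one explicit sentence.
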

\begin{proof}
Assume that $y_{n+1}=y_n=x_n$. From the definition of $y_{n+1}$,
$$ x_n=\arg\min\left\{\lambda_n f(x_n,y)+\frac{1}{2}||x_n-y||^2: y\in C\right\}. $$
Thus, from \cite[Proposition 2.1]{M2000}, one has $x_n\in EP(f,C)$ . The proof of Lemma  $\ref{lem2}$ is complete.
\end{proof}
We need the lemma below which is an infinite version of Theorem 27.4 in \cite{R1970} and is similarly proved by using
Moreau-Rockafellar Theorem to find the subdifferential of a sum of a convex function $g$ and the indicator function 
$\delta_C$ to $C$ in a real Hilbert space.
\begin{lemma}\label{lem.Equivalent_MinPro}
Let $C$ be a convex subset of a real Hilbert space H and $g:C\to \Re$ be a convex and subdifferentiable function on $C$. Then, 
$x^*$ is a solution to the following convex optimization problem
\begin{equation*}\min\left\{g(x):x\in C\right\}
\end{equation*}
if and only if  ~  $0\in \partial g(x^*)+N_C(x^*)$, where $\partial g(.)$ denotes the subdifferential of $g$ and $N_C(x^*)$ is the normal cone 
of  $C$ at $x^*$.
\end{lemma}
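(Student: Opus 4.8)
The plan is to recast the constrained minimization as an unconstrained one by means of the indicator function, apply the elementary characterization of a minimizer of a convex function through its subdifferential, and then split the subdifferential of the resulting sum via the Moreau--Rockafellar theorem, exactly as announced before the statement.

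First I would introduce the indicator function $\delta_C:H\to\Re\cup\{+\infty\}$, given by $\delta_C(x)=0$ if $x\in C$ and $\delta_C(x)=+\infty$ otherwise. Since $\delta_C$ is convex and vanishes precisely on $C$, the point $x^*$ solves $\min\{g(x):x\in C\}$ if and only if $x^*$ minimizes the convex function $h:=g+\delta_C$ over all of $H$. The next step is the elementary optimality characterization for convex functions: directly from the definition of the subdifferential, the inclusion $0\in\partial h(x^*)$ means $h(y)-h(x^*)\ge\langle 0,y-x^*\rangle=0$ for every $y\in H$, which is exactly the statement that $x^*$ is a global minimizer of $h$. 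Hence $x^*$ is optimal if and only if $0\in\partial(g+\delta_C)(x^*)$.

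It then remains to compute this subdifferential. By the Moreau--Rockafellar theorem one has $\partial(g+\delta_C)(x^*)=\partial g(x^*)+\partial\delta_C(x^*)$, and a short computation identifies $\partial\delta_C(x^*)$ with the normal cone: a vector $w$ lies in $\partial\delta_C(x^*)$ if and only if $\delta_C(y)-\delta_C(x^*)\ge\langle w,y-x^*\rangle$ for all $y\in H$; this inequality is automatic when $y\notin C$, while for $y\in C$ it reduces to $\langle w,y-x^*\rangle\le 0$, which is precisely the defining condition of $N_C(x^*)$. Combining these identities yields $0\in\partial g(x^*)+N_C(x^*)$, and since every implication above is in fact an equivalence, the converse direction follows by reading the same chain backwards.

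The delicate point is the application of the Moreau--Rockafellar sum rule, which in an infinite-dimensional Hilbert space requires a constraint qualification rather than being purely formal as in the finite-dimensional Theorem 27.4 of \cite{R1970}. The standard sufficient condition --- that one of the summands be finite and continuous at a point of the common effective domain --- is met here, since $g$ is finite on $C=\mathrm{dom}\,\delta_C$ and is assumed subdifferentiable throughout $C$, so that the sum rule applies and the inclusion splits as claimed. This is the only step where the infinite-dimensional setting demands more than the bare algebra of subdifferentials, and it is exactly the role played by the Moreau--Rockafellar theorem invoked in the remark preceding the lemma.
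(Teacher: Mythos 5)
Your overall route --- indicator function, the elementary characterization $0\in\partial h(x^*)$ of a global minimizer of $h=g+\delta_C$, and the Moreau--Rockafellar sum rule --- is exactly the route the paper announces before the lemma, and the reduction to $h$, the optimality characterization, and the identification $\partial\delta_C(x^*)=N_C(x^*)$ are all fine. The genuine weak point is your verification of the constraint qualification. You assert that the sum rule applies because ``$g$ is finite on $C=\mathrm{dom}\,\delta_C$ and is assumed subdifferentiable throughout $C$'', but subdifferentiability of a convex function at every point of $C$ does not imply its continuity at any point of $C$; and the other summand $\delta_C$ is continuous only at interior points of $C$, which need not exist for a closed convex set in an infinite-dimensional space (and the lemma is applied to an arbitrary feasible set $C$, possibly with empty interior). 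So, if $\partial g$ is read as the full subdifferential on $H$, the decomposition $\partial(g+\delta_C)(x^*)=\partial g(x^*)+\partial\delta_C(x^*)$ is not justified by what you wrote, and without some qualification the sum rule can genuinely fail in infinite dimensions.

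The repair is to use the paper's own definition of the subdifferential, which is taken relative to $C$: $\partial g(x)=\left\{w\in H: g(y)-g(x)\ge\langle w,y-x\rangle~\forall y\in C\right\}$. With this convention no sum rule (hence no constraint qualification) is needed at all. If $x^*$ minimizes $g$ over $C$, then $g(y)-g(x^*)\ge 0=\langle 0,y-x^*\rangle$ for all $y\in C$, so $0\in\partial g(x^*)$; since also $0\in N_C(x^*)$, we get $0\in\partial g(x^*)+N_C(x^*)$. Conversely, if $0=w+\bar w$ with $w\in\partial g(x^*)$ and $\bar w\in N_C(x^*)$, then for every $y\in C$,
\begin{equation*}
g(y)-g(x^*)\ \ge\ \langle w,y-x^*\rangle\ =\ -\langle \bar w,y-x^*\rangle\ \ge\ 0,
\end{equation*}
so $x^*$ is a minimizer. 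Equivalently: with the relative definition one has $\partial(g+\delta_C)(x^*)=\partial g(x^*)$ and $\partial g(x^*)+N_C(x^*)=\partial g(x^*)$ identically, so every equality in your chain is true, but for a trivial reason rather than because of a continuity hypothesis. Replacing the Moreau--Rockafellar step by this two-line argument closes the gap and in fact yields a proof more elementary than the sketch the paper itself gives.
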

Based on Lemma $\ref{lem.Equivalent_MinPro}$, we obtain the following central lemma which is used to prove the convergence 
of Algorithm $\ref{algor1}$.
\begin{lemma}\label{lem1}
Assume that $x^*\in EP(f,C)$. Let $\left\{x_n\right\},\left\{y_n\right\}$ be the sequences generated by Algorithm $\ref{algor1}$. Then, there holds the relation
$$
 ||y_{n+1} - x^*||^2\leq ||x_n-x^*||^2+\epsilon_n,
$$
where $\epsilon_n$ is defined by Step 2 of Algorithm $\ref{algor1}$.
\end{lemma}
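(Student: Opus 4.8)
The plan is to extract everything from the optimality condition for the \emph{single} optimization program defining $y_{n+1}$, and then to chain it with the analogous condition at the previous step together with the Lipschitz-type estimate.

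First I would apply Lemma \ref{lem.Equivalent_MinPro} to the convex objective $g(y)=\lambda f(y_n,y)+\frac{1}{2}||x_n-y||^2$, whose minimizer over $C$ is $y_{n+1}$. This yields elements $w$ in the subdifferential of $f(y_n,\cdot)$ at $y_{n+1}$ and $v\in N_C(y_{n+1})$ with $x_n-y_{n+1}=\lambda w+v$. Pairing with $y-y_{n+1}$ for an arbitrary $y\in C$ and using the subgradient inequality $\langle w,y-y_{n+1}\rangle\le f(y_n,y)-f(y_n,y_{n+1})$ together with $\langle v,y-y_{n+1}\rangle\le 0$ gives the basic variational inequality
$$\langle x_n-y_{n+1},\,y-y_{n+1}\rangle\le \lambda\,(f(y_n,y)-f(y_n,y_{n+1})),\qquad \forall y\in C, \tag{$\star$}$$
which is the workhorse of the whole argument.

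Next I would specialize $(\star)$ to $y=x^*$ and invoke the three-point identity $2\langle x_n-y_{n+1},x^*-y_{n+1}\rangle=||x_n-y_{n+1}||^2+||x^*-y_{n+1}||^2-||x_n-x^*||^2$, obtaining $||y_{n+1}-x^*||^2\le ||x_n-x^*||^2-||x_n-y_{n+1}||^2+2\lambda(f(y_n,x^*)-f(y_n,y_{n+1}))$. Since $x^*\in EP(f,C)$ gives $f(x^*,y_n)\ge 0$, the pseudomonotonicity in (A1) forces $f(y_n,x^*)\le 0$, so that term is discarded and it remains to control $-2\lambda f(y_n,y_{n+1})$. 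For this I would combine two ingredients: the Lipschitz-type continuity (A2) applied to the triple $(y_{n-1},y_n,y_{n+1})$, which bounds $f(y_n,y_{n+1})$ below by $f(y_{n-1},y_{n+1})-f(y_{n-1},y_n)$ minus two quadratic remainders, and the inequality $(\star)$ written at the previous step (for $y_n$, tested at $y=y_{n+1}$), which bounds $f(y_{n-1},y_{n+1})-f(y_{n-1},y_n)$ below by $\frac{1}{\lambda}\langle x_{n-1}-y_n,y_{n+1}-y_n\rangle$. Substituting leaves an estimate containing the cross term $-2\langle x_{n-1}-y_n,y_{n+1}-y_n\rangle$ together with $-||x_n-y_{n+1}||^2$ and the two Lipschitz remainders.

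The final step is bookkeeping on these cross terms. Writing $x_{n-1}-y_n=(x_{n-1}-x_n)+(x_n-y_{n+1})+(y_{n+1}-y_n)$ and merging $-||x_n-y_{n+1}||^2-2\langle x_n-y_{n+1},y_{n+1}-y_n\rangle$ into $-||x_n-y_n||^2+||y_{n+1}-y_n||^2$, I am left with $-2\langle x_{n-1}-x_n,y_{n+1}-y_n\rangle$, which Young's inequality with the parameter $k$ bounds by $k||x_n-x_{n-1}||^2+\frac{1}{k}||y_{n+1}-y_n||^2$. Discarding the nonpositive term $-||x_n-y_n||^2$ and collecting coefficients reproduces the claimed bound $||y_{n+1}-x^*||^2\le ||x_n-x^*||^2+\epsilon_n$. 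I expect this assembly of the inner-product decomposition with the Young step to be the main obstacle, since the constants must land in exactly the right places; note that the way the two quadratic remainders attach to $c_1$ and $c_2$ depends on the orientation of the Lipschitz triple, but only the sum $c_1+c_2$ enters the standing conditions on $\lambda$ and $k$, so the precise labelling is immaterial for the subsequent convergence analysis.
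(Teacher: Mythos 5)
Your proposal is correct and follows essentially the same route as the paper's proof: the same optimality-condition lemma applied at steps $n$ and $n+1$, the same specialization at $y=x^*$ with pseudomonotonicity, the same Lipschitz-type estimate on the triple $(y_{n-1},y_n,y_{n+1})$, and the same Young inequality with parameter $k$, your handling of the cross terms (decomposing $x_{n-1}-y_n$ rather than expanding $\|x_n-y_{n+1}\|^2$ through $x_{n-1}$ with a triangle-inequality step) being only a cosmetic variant that even retains an extra nonpositive term $-\|x_n-y_n\|^2$ before discarding it. Your closing remark on the labelling is also on point: the paper's own proof ends with $2\lambda c_1\|y_{n-1}-y_n\|^2$ and $-\left(1-\tfrac{1}{k}-2\lambda c_2\right)\|y_n-y_{n+1}\|^2$, which matches the $\epsilon_n$ stated in the algorithm only after swapping $c_1$ and $c_2$ — a typo in the paper that, as you say, is harmless because only the sum $c_1+c_2$ enters the conditions on $\lambda$ and $k$.
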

\begin{proof}
From the definition of $y_{n+1}$ and Lemma $\ref{lem.Equivalent_MinPro}$,
\begin{equation*}
0\in \partial_2\left(\lambda f(y_n,y)+\frac{1}{2}||x_n-y||^2\right)(y_{n+1})+N_C(y_{n+1}).
\end{equation*}
Thus, there exist $w\in \partial_2 f(y_n,y_{n+1}):=\partial f(y_n,.)(y_{n+1})$ and $\bar{w}\in N_C(y_{n+1})$ such that
\begin{equation*}
\lambda w+y_{n+1}-x_n+\bar{w}=0.
\end{equation*}
Hence,
\begin{equation*}
\left\langle y_{n+1}-x_n,y-y_{n+1}\right\rangle=\lambda \left\langle w,y_{n+1}-y\right\rangle+\left\langle \bar{w},y_{n+1}-y\right\rangle,~\forall y\in C.
\end{equation*}
This together with the definition of $N_C$ implies that
\begin{equation*}
\left\langle y_{n+1}-x_n,y-y_{n+1}\right\rangle\ge\lambda \left\langle w,y_{n+1}-y\right\rangle,~\forall y\in C.
\end{equation*}
By $w\in \partial_2 f(y_n,y_{n+1})$,
\begin{equation*}
f(y_n,y)-f(y_n,y_{n+1})\ge \left\langle w,y-y_{n+1}\right\rangle,~\forall y\in C.
\end{equation*}
From the last two inequalities, we obtain
\begin{equation}\label{eq:1}
\left\langle y_{n+1}-x_n,y-y_{n+1}\right\rangle\ge \lambda\left(f(y_n,y_{n+1})-f(y_n,y)\right),~\forall y\in C.
\end{equation}
Similarly, by replacing $n+1$ by $n$, we also have
\begin{equation}\label{eq:2}
\left\langle y_{n}-x_{n-1},y-y_{n}\right\rangle\ge \lambda\left(f(y_{n-1},y_{n})-f(y_{n-1},y)\right),~\forall y\in C.
\end{equation}
Substituting $y=y_{n+1}$ onto $(\ref{eq:2})$ and a straightforward computation yield
\begin{equation}\label{eq:3}
\lambda\left(f(y_{n-1},y_{n+1})-f(y_{n-1},y_{n})\right)\ge\left\langle y_{n}-x_{n-1},y_{n}-y_{n+1}\right\rangle .
\end{equation}
Substituting $y=x^*$ onto $(\ref{eq:1})$ we also obtain
\begin{equation}\label{eq:3*}
\left\langle y_{n+1}-x_n,x^*-y_{n+1}\right\rangle\ge \lambda\left(f(y_n,y_{n+1})-f(y_n,x^*)\right).
\end{equation}
Since $x^*\in EP(f,C)$ and $y_n\in C$, $f(x^*,y_n)\ge 0$. Thus, from the pseudomonotonicity of $f$ one has $f(y_n,x^*)\le 0$. 
This together with $(\ref{eq:3*})$ implies that 
\begin{equation}\label{eq:4}
\left\langle y_{n+1}-x_n,x^*-y_{n+1}\right\rangle\ge \lambda f(y_n,y_{n+1}).
\end{equation}
By the Lipschitz-type continuity of $f$,
\begin{equation*}
f(y_{n-1},y_n)+f(y_n,y_{n+1})\ge f(y_{n-1},y_{n+1})-c_1||y_{n-1}-y_n||^2-c_2||y_n-y_{n+1}||^2.
\end{equation*}
Thus,
\begin{equation}\label{eq:5}
f(y_n,y_{n+1})\ge f(y_{n-1},y_{n+1})-f(y_{n-1},y_n)-c_1||y_{n-1}-y_n||^2-c_2||y_n-y_{n+1}||^2.
\end{equation}
The relations $(\ref{eq:4})$ and $(\ref{eq:5})$ lead to
\begin{eqnarray*}
\left\langle y_{n+1}-x_n,x^*-y_{n+1}\right\rangle&\ge& \lambda\left\{f(y_{n-1},y_{n+1})-f(y_{n-1},y_n)\right\}\nonumber\\
&&-\lambda c_1||y_{n-1}-y_n||^2-\lambda c_2||y_n-y_{n+1}||^2.\label{eq:6}
\end{eqnarray*}
This together with the relation $(\ref{eq:3})$ implies that
\begin{eqnarray*}
\left\langle y_{n+1}-x_n,x^*-y_{n+1}\right\rangle&\ge& \left\langle y_{n}-x_{n-1},y_{n}-y_{n+1}\right\rangle-\lambda c_1||y_{n-1}-y_n||^2\nonumber\\
&&-\lambda c_2||y_n-y_{n+1}||^2.
\end{eqnarray*}
Thus,
\begin{eqnarray}\label{eq:7}
2\left\langle y_{n+1}-x_n,x^*-y_{n+1}\right\rangle&-& 2\left\langle y_{n}-x_{n-1},y_{n}-y_{n+1}\right\rangle\ge-2\lambda c_1||y_{n-1}-y_n||^2\nonumber\\
&&-2\lambda c_2||y_n-y_{n+1}||^2.
\end{eqnarray}
We have the following fact
\begin{eqnarray}
&&2\left\langle y_{n+1}-x_n,x^*-y_{n+1}\right\rangle=||x_n-x^*||^2-||y_{n+1}-x^*||^2-||x_n-y_{n+1}||^2\nonumber\\ 
&=& ||x_n-x^*||^2-||y_{n+1}-x^*||^2-||x_n-x_{n-1}||^2-2\left\langle x_n-x_{n-1},x_{n-1}-y_{n+1}\right\rangle\nonumber\\
&&-||x_{n-1}-y_{n+1}||^2\nonumber\\
&=&||x_n-x^*||^2-||y_{n+1}-x^*||^2-||x_n-x_{n-1}||^2-2\left\langle x_n-x_{n-1},x_{n-1}-y_{n+1}\right\rangle\nonumber\\
&&-||x_{n-1}-y_{n}||^2-2\left\langle x_{n-1}-y_{n},y_{n}-y_{n+1}\right\rangle-||y_{n}-y_{n+1}||^2.\label{eq:8**}
\end{eqnarray}
By the triangle, Cauchy-Schwarz and Cauchy inequalities,
\begin{eqnarray*}
&-&2\left\langle x_n-x_{n-1},x_{n-1}-y_{n+1}\right\rangle\le2||x_n-x_{n-1}||||x_{n-1}-y_{n+1}||\\
&&\le2||x_n-x_{n-1}||||x_{n-1}-y_{n}||+2||x_n-x_{n-1}||||y_{n}-y_{n+1}||\\ 
&&\le|| x_n-x_{n-1}||^2+||x_{n-1}-y_{n}||^2+k|| x_n-x_{n-1}||^2+\frac{1}{k}||y_{n}-y_{n+1}||^2.
\end{eqnarray*}
This together with $(\ref{eq:8**})$ implies that 
\begin{align}
&2\left\langle y_{n+1}-x_n,x^*-y_{n+1}\right\rangle\le ||x_n-x^*||^2-||y_{n+1}-x^*||^2+k|| x_n-x_{n-1}||^2\nonumber\\
&+2\left\langle y_n-x_{n-1},y_{n}-y_{n+1}\right\rangle+\left(\frac{1}{k}-1\right)||y_{n}-y_{n+1}||^2\label{eq:8***}.
\end{align}
Thus, 
\begin{eqnarray}
&&2\left\langle y_{n+1}-x_n,x^*-y_{n+1}\right\rangle-2\left\langle y_n-x_{n-1},y_{n}-y_{n+1}\right\rangle\le||x_n-x^*||^2\nonumber\\
&&-||y_{n+1}-x^*||^2+k|| x_n-x_{n-1}||^2+\left(\frac{1}{k}-1\right)||y_{n}-y_{n+1}||^2\label{eq:8***}.
\end{eqnarray}
Combining $(\ref{eq:7})$ and $(\ref{eq:8***})$ we obtain 
\begin{eqnarray*}
-2\lambda c_1||y_{n-1}-y_n||^2&-&2\lambda c_2||y_n-y_{n+1}||^2\le||x_n-x^*||^2-||y_{n+1}-x^*||^2\\
&+&k|| x_n-x_{n-1}||^2+\left(\frac{1}{k}-1\right)||y_{n}-y_{n+1}||^2. 
\end{eqnarray*}
Thus, from the definition of $\epsilon_n$ we obtain
\begin{eqnarray*}
||y_{n+1}-x^*||^2&\leq&||x_n-x^*||^2+k|| x_n-x_{n-1}||^2+2\lambda c_1||y_{n-1}-y_n||^2\\
&&-\left(1-\frac{1}{k}-2\lambda c_2\right)||y_{n}-y_{n+1}||^2. \\
&=&||x_n-x^*||^2+\epsilon_n.
\end{eqnarray*}
Lemma $\ref{lem1}$ is proved.
\end{proof}
\begin{lemma}\label{lem4}
Let $\left\{x_n\right\},\left\{y_n\right\}$ be the sequences generated by Algorithm $\ref{algor1}$. Then, there hold the following relations
\begin{itemize}
\item [$(i)$] $EP(f,C)\subset C_n\cap Q_n$ for all $n\ge 0$.
\item [$(ii)$] $\lim_{n\to\infty}||x_{n+1}-x_n||=\lim_{n\to\infty}||y_n-x_n||=\lim_{n\to\infty}||y_{n+1} - y_{n}||=0.$
\end{itemize}
\end{lemma}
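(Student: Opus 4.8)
The plan is to treat the two assertions separately: I would derive $(i)$ by induction from Lemma \ref{lem1} and the projection characterization, and $(ii)$ from the monotonicity of $\{||x_n-x_0||\}$ followed by an application of the technical Lemma \ref{lem.technique}. For $(i)$, I would first observe that expanding the squared norms in the definition of $C_n$ cancels the term $||z||^2$, so each $C_n$ is either a closed half-space or all of $H$, and each $Q_n$ is visibly a closed half-space; thus $C_n\cap Q_n$ is closed and convex. The inclusion $EP(f,C)\subset C_n$ is immediate, since for $x^*\in EP(f,C)$ Lemma \ref{lem1} yields $||y_{n+1}-x^*||^2\le||x_n-x^*||^2+\epsilon_n$, which is exactly the membership condition for $C_n$. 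The inclusion $EP(f,C)\subset Q_n$ I would establish by induction on $n$: it is trivial for $n=0$ because $Q_0=H$, and if $EP(f,C)\subset C_n\cap Q_n$ then this set is nonempty (as $EP(f,C)\ne\emptyset$), closed and convex, so $x_{n+1}=P_{C_n\cap Q_n}(x_0)$ exists and Lemma \ref{lem.PropertyPC}$(iii)$ gives $\left\langle x_0-x_{n+1},z-x_{n+1}\right\rangle\le 0$ for all $z\in C_n\cap Q_n\supset EP(f,C)$, i.e. $EP(f,C)\subset Q_{n+1}$.

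For $(ii)$, the crucial preliminary fact is that $x_n=P_{Q_n}(x_0)$: one checks $x_n\in Q_n$ from $\left\langle x_0-x_n,x_n-x_n\right\rangle=0$, while the inequality defining $Q_n$ is precisely the variational characterization of the projection. Since $x_{n+1}\in C_n\cap Q_n\subset Q_n$, Lemma \ref{lem.PropertyPC}$(ii)$ applied with $C=Q_n$ and $y=x_0$ yields
\[
||x_{n+1}-x_n||^2+||x_n-x_0||^2\le||x_{n+1}-x_0||^2,
\]
so $\{||x_n-x_0||\}$ is nondecreasing; it is also bounded, because $x^\dagger:=P_{EP(f,C)}(x_0)$ lies in $C_n\cap Q_n$ and minimality of the projection gives $||x_{n+1}-x_0||\le||x^\dagger-x_0||$. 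Hence $\{||x_n-x_0||\}$ converges, and summing the displayed inequality shows $\sum_n||x_{n+1}-x_n||^2<\infty$ and $||x_{n+1}-x_n||\to 0$, the first of the three limits.

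To reach the remaining limits, I would use that $x_{n+1}\in C_n$ and substitute the formula for $\epsilon_n$; moving the negative term to the left gives
\[
||y_{n+1}-x_{n+1}||^2+a||y_{n+1}-y_n||^2\le||x_{n+1}-x_n||^2+k||x_n-x_{n-1}||^2+b||y_n-y_{n-1}||^2,
\]
with $a=1-\tfrac1k-2\lambda c_1$ and $b=2\lambda c_2$. With the choices $\alpha_n=||y_{n+1}-x_{n+1}||^2$, $\beta_n=||x_{n+1}-x_n||^2+k||x_n-x_{n-1}||^2$, $\gamma_n=||y_n-y_{n-1}||^2$, $\alpha=a$, $\beta=b$, this is exactly the hypothesis of Lemma \ref{lem.technique}, and $\sum_n\beta_n<\infty$ follows from the summability just proved. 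I expect the main obstacle to be verifying the sign condition $\alpha>\beta\ge 0$: this is precisely where the prescribed ranges $0<\lambda<\tfrac{1}{2(c_1+c_2)}$ and $k>\tfrac{1}{1-2\lambda(c_1+c_2)}$ enter, since $k>\tfrac{1}{1-2\lambda(c_1+c_2)}$ rearranges to $1-\tfrac1k>2\lambda(c_1+c_2)$, giving $a>b\ge 0$. Lemma \ref{lem.technique} then yields $||y_{n+1}-x_{n+1}||\to 0$, hence $||y_n-x_n||\to 0$ after shifting the index, and finally the triangle inequality $||y_{n+1}-y_n||\le||y_{n+1}-x_{n+1}||+||x_{n+1}-x_n||+||x_n-y_n||\to 0$ delivers the last limit.
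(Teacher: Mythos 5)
Your proposal is correct and follows essentially the same route as the paper: part $(i)$ by induction combining Lemma \ref{lem1} with the projection characterization of Lemma \ref{lem.PropertyPC}$(iii)$, and part $(ii)$ via $x_n=P_{Q_n}(x_0)$, the monotone bounded sequence $\{\|x_n-x_0\|\}$, summability of $\|x_{n+1}-x_n\|^2$, and then Lemma \ref{lem.technique} with exactly the same choices of $\alpha_n,\beta_n,\gamma_n,\alpha,\beta$. The only cosmetic differences (deriving boundedness from minimality of the projection rather than from inequality (\ref{eq:8}), and noting explicitly that $C_n$ may degenerate to all of $H$) do not change the argument.
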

\begin{proof}
(i). From the definitions of $C_n$ and $Q_n$, we see that they are the half-spaces. Thus, $C_n$ and $Q_n$ 
are closed and convex for all $n\ge 0$. Lemma $\ref{lem1}$ and the definition of $C_n$ ensure that $EP(f,C)\subset C_n$ for all $n\ge 0$. 
It is clear that $EP(f,C)\subset C_0\cap Q_0$. Assume that $EP(f,C)\subset C_n\cap Q_n$ for some $n\ge 0$. From $x_{n+1}=P_{C_n\cap Q_n}(x_0)$ 
and Lemma $\ref{lem.PropertyPC}$(iii) we see that $\left\langle z-x_{n+1},x_0-x_{n+1}\right\rangle\le 0$ for all $z\in C_n\cap Q_n$. This 
is also true for all $z\in F$ because $EP(f,C)\subset C_n\cap Q_n$. From the definition of $Q_{n+1}$, $EP(f,C)\subset Q_{n+1}$ or 
$EP(f,C)\subset C_{n+1}\cap Q_{n+1}$. By the induction, $EP(f,C)\subset C_n\cap Q_n$ for all $n\ge 0$. Since $EP(f,C)$ is nonempty, so $C_n\cap Q_n$ is. 
Thus, $P_{C_n\cap Q_n}(x_0)$ is well-defined.\\
(ii). From the definition of $Q_n$ and Lemma $\ref{lem.PropertyPC}$(iii.), $x_n=P_{Q_n}(x_0)$. Thus, from Lemma $\ref{lem.PropertyPC}$(ii) we have
\begin{equation}\label{eq:8}
||z-x_n||^2\le||z-x_0||^2-||x_n-x_0||^2,~\forall z\in Q_n.
\end{equation}
Substituting $z=x^\dagger:=P_{EP(f,C))}(x_0)\in Q_n$ onto $(\ref{eq:8})$, one has 
\begin{equation}\label{eq:8*}
||x^\dagger-x_0||^2-||x_n-x_0||^2\ge ||x^\dagger-x_n||^2\ge 0.
\end{equation}
Thus, the sequence $\left\{||x_n-x_0||\right\}$, therefore $\left\{x_n\right\}$, are bounded. Substituting $z=x_{n+1}\in Q_n$ onto $(\ref{eq:8})$, one also has 
\begin{equation}\label{eq:9}
0\le||x_{n+1}-x_n||^2\le||x_{n+1}-x_0||^2-||x_n-x_0||^2.
\end{equation}
This implies that $\left\{||x_n-x_0||\right\}$ is non-decreasing. Hence, there exists the limit of $\left\{||x_n-x_0||\right\}$. 
By $(\ref{eq:9})$, 
$$
\sum_{n=1}^K||x_{n+1}-x_n||^2\le ||x_{K+1}-x_0||^2-||x_1-x_0||^2,~\forall K\ge 1.
$$
Passing the limit in the last inequality as $K\to\infty$, we obtain
\begin{equation}\label{eq:10*}
\sum_{n=1}^\infty||x_{n+1}-x_n||^2<+\infty.
\end{equation}
Thus,
\begin{equation}\label{eq:11*}
\lim_{n\to\infty}||x_{n+1}-x_n||=0.
\end{equation}
From the definition of $C_n$ and $x_{n+1}\in C_n$,
\begin{equation}\label{eq:12}
 ||y_{n+1} - x_{n+1}||^2\leq ||x_n-x_{n+1}||^2+\epsilon_n.
\end{equation}
Set $\alpha_n=||y_{n+1} - x_{n+1}||^2$, $\beta_n=||x_n-x_{n+1}||^2+k||x_n-x_{n-1}||^2$, $\gamma_n=||y_n-y_{n-1}||^2$, $\beta=2\lambda c_2$, 
and  $\alpha=1-\frac{1}{k}-2\lambda c_1$. From the definition of $\epsilon_n$, $\epsilon_n=k||x_n-x_{n-1}||^2+\beta \gamma_n-\alpha \gamma_{n+1}$. 
Thus, from $(\ref{eq:12})$, 
\begin{equation}\label{eq:13*}
\alpha_n\le \beta_n+\beta \gamma_n-\alpha \gamma_{n+1}.
\end{equation}
From the hypothesises of $\lambda, ~k$ and $(\ref{eq:10*})$, we see that $\alpha>\beta\ge 0$ and $\sum_{n=1}^\infty \beta_n<+\infty$. Lemma 
$\ref{lem.technique}$ and $(\ref{eq:13*})$ imply that $\alpha_n\to 0$, or 
\begin{equation}\label{eq:14}
\lim_{n\to\infty}||y_{n+1}- x_{n+1}||=0.
\end{equation}
This together with the relation $(\ref{eq:11*})$ and the inequality $||y_{n+1}-y_{n}||\le||y_{n+1}-x_{n+1}||+||x_{n+1}-x_n||+||x_{n}-y_{n}||$ implies that 
\begin{equation}\label{eq:15}
\lim_{n\to\infty}||y_{n+1} - y_{n}||=0.
\end{equation}
In addition, the sequence $\left\{y_{n}\right\}$ is also bounded because of the boundedness of $\left\{x_n\right\}$. Lemma $\ref{lem4}$ is proved.\\
\end{proof}
Thanks to Lemma $\ref{lem2}$, we see that if Algorithm $\ref{algor1}$ terminates at the iterate $n$ then a solution of EP can be found. 
Otherwise, if Algorithm $\ref{algor1}$ 
does not terminate then we have the following main result.
\begin{theorem}\label{theo.1}
Let $C$ be a nonempty closed convex subset of a real Hilbert space $H$. Assume that the bifunction $f$ satisfies all conditions $\rm (A1)-(A4)$. 
In addition the solution set $EP(f,C)$ is nonempty. Then, the sequences $\left\{x_n\right\}$, $\left\{y_n\right\}$ generated by Algorithm 
$\ref{algor1}$ converge strongly to $P_{EP(f,C)}(x_0)$.
\end{theorem}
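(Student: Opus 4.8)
The plan is to handle the non-terminating case (the terminating case being settled by Lemma \ref{lem2}) via the classical three-stage argument for hybrid/CQ projection schemes. The boundedness of $\{x_n\}$ and $\{y_n\}$ and the vanishing of the consecutive gaps $\|x_{n+1}-x_n\|$, $\|y_n-x_n\|$, $\|y_{n+1}-y_n\|$ are already in hand from Lemma \ref{lem4}, so the real work reduces to: (a) showing every weak cluster point of $\{x_n\}$ lies in $EP(f,C)$; (b) upgrading weak to strong convergence; and (c) identifying the strong limit as $x^\dagger:=P_{EP(f,C)}(x_0)$. Stages (b) and (c) I would run together, exploiting the geometry of the half-space $Q_n$ and the Kadec--Klee property.

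For stage (a), I extract a subsequence $x_{n_k}\rightharpoonup p$. Since $\|y_n-x_n\|\to 0$ and $\|y_{n+1}-y_n\|\to 0$ by Lemma \ref{lem4}(ii), the companions satisfy $y_{n_k}\rightharpoonup p$ and $y_{n_k+1}\rightharpoonup p$ as well, and $p\in C$ because $C$ is weakly closed. The engine is inequality $(\ref{eq:1})$, rewritten as $\lambda f(y_n,y)\ge \lambda f(y_n,y_{n+1})-\langle y_{n+1}-x_n,\,y-y_{n+1}\rangle$ for every $y\in C$. Passing to the limit along $n_k$: the inner product vanishes because $\|y_{n+1}-x_n\|\le\|y_{n+1}-x_{n+1}\|+\|x_{n+1}-x_n\|\to 0$ (combining $(\ref{eq:14})$ and $(\ref{eq:11*})$) while $\{y-y_{n_k+1}\}$ stays bounded; the weak continuity (A3) sends $f(y_{n_k},y_{n_k+1})\to f(p,p)=0$ and $f(y_{n_k},y)\to f(p,y)$. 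Hence $\lambda f(p,y)\ge 0$, i.e. $f(p,y)\ge 0$ for all $y\in C$, so $p\in EP(f,C)$. Pseudomonotonicity is not invoked again here, having already been consumed inside the proof of Lemma \ref{lem1}.

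For stages (b)--(c), recall from the proof of Lemma \ref{lem4}(ii) that $x_n=P_{Q_n}(x_0)$ and $EP(f,C)\subset Q_n$, whence $\|x_n-x_0\|\le\|x^\dagger-x_0\|$, while $(\ref{eq:9})$ shows $\{\|x_n-x_0\|\}$ is nondecreasing, hence convergent. By weak lower semicontinuity of $\|\cdot-x_0\|^2$ together with $p\in EP(f,C)$, I obtain the chain $\|x^\dagger-x_0\|\le\|p-x_0\|\le\liminf_k\|x_{n_k}-x_0\|\le\limsup_k\|x_{n_k}-x_0\|\le\|x^\dagger-x_0\|$; the minimality defining $x^\dagger$ then forces $\|p-x_0\|=\|x^\dagger-x_0\|$ and, by uniqueness of the nearest point, $p=x^\dagger$. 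As every weak cluster point equals $x^\dagger$ and $\{x_n\}$ is bounded, the whole sequence satisfies $x_n\rightharpoonup x^\dagger$ and $\|x_n-x_0\|\to\|x^\dagger-x_0\|$. The Kadec--Klee property applied to $x_n-x_0\rightharpoonup x^\dagger-x_0$ with convergent norms yields $x_n\to x^\dagger$ strongly, and $\|y_n-x_n\|\to 0$ gives $y_n\to x^\dagger$ as well.

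I expect the main obstacle to be stage (a): one must first assemble $\|y_{n+1}-x_n\|\to 0$ out of the two separate limits in Lemma \ref{lem4}(ii), and then use the weak continuity (A3) on both slots \emph{simultaneously} to kill $f(y_{n_k},y_{n_k+1})$ --- this weak--weak joint continuity is precisely what legitimizes the passage to the limit without any compactness. A secondary delicacy in stage (c) is that the inequality chain closes only because the weak limit $p$ is already known to lie in $EP(f,C)$, so stages (a) and (c) are genuinely coupled.
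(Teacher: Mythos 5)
Your proposal is correct and follows essentially the same route as the paper's own proof: weak cluster points are shown to solve the EP via inequality $(\ref{eq:1})$, Lemma $\ref{lem4}$(ii) and the weak continuity (A3), then the minimality bound $\|x_n-x_0\|\le\|x^\dagger-x_0\|$ from $(\ref{eq:8*})$ together with weak lower semicontinuity of the norm identifies the limit as $x^\dagger=P_{EP(f,C)}(x_0)$, and the Kadec--Klee property upgrades to strong convergence. Your version is in fact slightly more careful at two points where the paper is terse: you make the subsequence argument explicit (every weak cluster point equals $x^\dagger$, hence the whole sequence converges), and you apply Kadec--Klee to $x_n-x_0$ rather than inferring $\|x_n\|\to\|x^\dagger\|$ as the paper does, which is the cleaner way to close that step.
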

\begin{proof}
From Lemma $\ref{lem4}$, the sequence $\left\{x_n\right\}$ is bounded. Assume that $p$ is any weak cluster point of $\left\{x_n\right\}$. 
Without loss of generality, we can write $x_n\rightharpoonup p$ as $n\to \infty$. Thus, $y_n\rightharpoonup p$ because $||x_n-y_n||\to 0$. 
Now, we show that $p\in EP(f,C)$.  
From $(\ref{eq:1})$, we get 
\begin{equation}\label{eq:15}
\lambda\left(f(y_n,y)-f(y_n,y_{n+1})\right)\ge \left\langle x_n-y_{n+1},y-y_{n+1}\right\rangle,~\forall y\in C.
\end{equation}
Passing the limit in $(\ref{eq:15})$ as $n\to\infty$ and using Lemma $\ref{lem4}$(ii), the bounedness of $\left\{y_n\right\}$ 
and $\lambda>0$ we obtain $f(p,y)\ge 0$ for all $y\in C$. Thus, $p\in EP(f,C)$. From the inequality $(\ref{eq:8*})$, we get
$$ ||x_n-x_0||\le ||x^\dagger-x_0||,$$
where $x^\dagger=P_{EP(f,C)}(x_0)$. By the weak lower semicontinuity of the norm $||.||$ and $x_n\rightharpoonup p$, we have
\begin{equation*}
||p-x_0||\le \lim_{n\to\infty}\inf||x_{n}-x_0||\le \lim_{n\to\infty}\sup||x_{n}-x_0||\le||x^\dagger-x_0||.
\end{equation*}
By the definition of $x^\dagger$, $p=x^\dagger$ and $\lim_{n\to\infty}||x_{n}-x_0||=||x^\dagger-x_0||$. Thus, $\lim_{n\to\infty}||x_{n}||=||x^\dagger||$. 
By the Kadec-Klee property of the Hilbert space $H$, we have $x_{n}\to x^\dagger=P_{EP(f,C)}x_0$ as $n\to\infty$. From Lemma $\ref{lem4}$, we 
also see that $\left\{y_n\right\}$ converges strongly to $P_{EP(f,C)}x_0$. Theorem $\ref{theo.1}$ is proved.
\end{proof}
\section{Applications}\label{Appl}
In this section, we introduce several applications of Algorithm \ref{algor1} to Gato differentiable EPs and multivalued variational inequalities.
\subsection{Gato differentiable equilibrium problems}
We consider EPs for Gato differentiable bifunctions. We denote $\nabla_2 f(x,y)$ by the Gato derivative of 
the function $f(x,.)$ at $y$. For solving EP $(\ref{eq:EP})$, we assume that the bifunction $f$ satisfies the following conditions:
\begin{itemize}
\item[\rm (B1).] $f$ is monotone on $C$ and $f(x,x)=0$ for all $x\in C$;
\item [\rm (B2).] $f(x,.)$ is convex and Gato differentiable on $C$;
\item [\rm (B3).] There exists a constant $L>0$ such that 
$$||\nabla_2 f (x,x)-\nabla_2 f (y,y)||\le L||x-y||~\forall x,y\in C;$$
\item [\rm (B4).] $\lim\limits_{t\to 0^+}\sup f(x+t(z-x),y)\le f(x,y)$ for all $x,y\in C$.
\end{itemize}  
\begin{remark}
If EP $(\ref{eq:EP})$ is reduced to VIP $(\ref{VIP})$ for the operator $A:C\to H$ then the condition $\rm (B3)$ is equivalent to the 
Lipschitzianity of $A$ with the constant $L>0$.
\end{remark}
We need the following results.
\begin{lemma} \cite[Lemma 2]{LSV2011}\label{lem3}
Suppose that the conditions $\rm (B1), (B2), (B4)$ hold. Then, 
\begin{itemize}
\item [$\rm i.$] The operator $A(x)=\nabla_2 f(x,x)$ is monotone on $C$.
\item [$\rm ii.$] $EP(f,C)=VI(A,C)$.
\end{itemize}
\end{lemma}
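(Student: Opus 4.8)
The plan is to extract from the convexity and Gâteaux differentiability of $f(x,\cdot)$ a single gradient inequality linking $f$ to the operator $A$, and then to read off both assertions from it together with the minimization characterization already available as Lemma \ref{lem.Equivalent_MinPro}. For each fixed $x\in C$, the function $g_x(\cdot):=f(x,\cdot)$ is convex and Gâteaux differentiable on $C$ by (B2), so its gradient belongs to its subdifferential. Writing the subgradient inequality for $g_x$ at the base point $y=x$ and using $f(x,x)=0$ from (B1) yields, since $\nabla_2 f(x,x)=A(x)$, the key estimate
\begin{equation*}
f(x,z)\ge \langle A(x),z-x\rangle,\qquad \forall x,z\in C.
\end{equation*}

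For part (i) I would apply this estimate to the ordered pairs $(x,y)$ and $(y,x)$, obtaining $f(x,y)\ge\langle A(x),y-x\rangle$ and $f(y,x)\ge\langle A(y),x-y\rangle$. Adding the two inequalities gives $f(x,y)+f(y,x)\ge\langle A(x)-A(y),\,y-x\rangle=-\langle A(x)-A(y),\,x-y\rangle$. By the monotonicity of $f$ in (B1) the left-hand side is nonpositive, so $\langle A(x)-A(y),x-y\rangle\ge 0$ for all $x,y\in C$, which is exactly the monotonicity of $A$.

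For part (ii) the observation is that $x^*\in EP(f,C)$, i.e. $f(x^*,y)\ge 0$ for all $y\in C$, is equivalent to saying that $x^*$ minimizes $g_{x^*}(\cdot)=f(x^*,\cdot)$ over $C$; indeed the candidate minimum value is $f(x^*,x^*)=0$, attained at $x^*\in C$ by (B1). Applying Lemma \ref{lem.Equivalent_MinPro} to the convex, subdifferentiable function $g_{x^*}$ characterizes this minimization as $0\in\partial g_{x^*}(x^*)+N_C(x^*)$. Since $g_{x^*}$ is Gâteaux differentiable, $\partial g_{x^*}(x^*)=\{\nabla_2 f(x^*,x^*)\}=\{A(x^*)\}$, so the condition becomes $-A(x^*)\in N_C(x^*)$, that is $\langle A(x^*),y-x^*\rangle\ge 0$ for all $y\in C$, which is precisely $x^*\in VI(A,C)$. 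As every step is an equivalence, this gives $EP(f,C)=VI(A,C)$ at once. (As a cross-check, the inclusion $VI(A,C)\subset EP(f,C)$ also drops out of the displayed gradient inequality, which forces $f(x^*,y)\ge\langle A(x^*),y-x^*\rangle\ge 0$.)

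The routine work is the convex-analysis bookkeeping; the step that deserves care is the identification $\partial g_{x^*}(x^*)=\{A(x^*)\}$, namely that for a convex Gâteaux differentiable function the subdifferential collapses to the singleton gradient, so that Lemma \ref{lem.Equivalent_MinPro} may legitimately be invoked. The core equivalences here rest only on (B1) and (B2); the hemicontinuity hypothesis (B4) enters to secure the one-sided regularity of $f$ in its first variable that makes $A(x)=\nabla_2 f(x,x)$ a well-behaved (hemicontinuous) operator, ensuring the identification above and the passage to the limit are not disrupted. I would also keep the sign convention of monotonicity from (B1), namely $f(x,y)+f(y,x)\le 0$, fixed throughout, since this is the single place where the direction of the inequality in part (i) is determined.
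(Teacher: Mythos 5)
The paper itself contains no proof of this lemma: it is imported, with citation, from \cite[Lemma 2]{LSV2011}, so there is no internal argument to compare yours against; what follows assesses your proof on its own terms. In substance your route is the standard one and it works: the gradient inequality $f(x,z)\ge\langle A(x),z-x\rangle$ for all $x,z\in C$ (from convexity and G\^ateaux differentiability of $f(x,\cdot)$ together with $f(x,x)=0$) gives part (i) after adding the two inequalities for the pairs $(x,y)$ and $(y,x)$ and invoking monotonicity of $f$, and it also gives the inclusion $VI(A,C)\subset EP(f,C)$; the reverse inclusion is the first-order optimality condition for the program $\min_{y\in C}f(x^*,y)$.

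The one step that needs repair is precisely the one you flagged: the claim $\partial g_{x^*}(x^*)=\{A(x^*)\}$. With this paper's definition, the subdifferential is taken \emph{relative to} $C$ (the subgradient inequality is imposed only for $y\in C$), and for such a relative subdifferential G\^ateaux differentiability does not force a singleton at boundary points of $C$: for $g(y)=y$ on $C=[0,1]$ one has $\partial g(0)=(-\infty,1]$ while $\nabla g(0)=1$. Since EP/VI solutions typically lie on the boundary of $C$ (and in infinite dimensions $C$ may have empty interior, so every point is a boundary point), you cannot appeal to this collapse. The fix is cheap and keeps your chain of equivalences intact: either (a) bypass the subdifferential in the forward direction, noting that minimality of $x^*$ gives $t^{-1}\left(f(x^*,x^*+t(y-x^*))-f(x^*,x^*)\right)\ge 0$ for $t\in(0,1]$ and $y\in C$, so letting $t\to 0^+$ yields $\langle A(x^*),y-x^*\rangle\ge 0$; or (b) prove the inclusion $\partial g_{x^*}(x^*)\subset A(x^*)+N_C(x^*)$ by the same directional-derivative computation, which together with the fact that $N_C(x^*)$ is a convex cone containing $0$ shows that $0\in\partial g_{x^*}(x^*)+N_C(x^*)$ if and only if $-A(x^*)\in N_C(x^*)$. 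Finally, your closing remark about (B4) is not accurate: nothing in your argument (or in the repaired one) uses (B4); as you prove it, the lemma needs only (B1) and (B2). Condition (B4) appears in the hypotheses because the cited source assumes it, not because this proof requires it, so you should either say so explicitly or drop the claim that it ``secures'' the limiting steps.
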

\begin{lemma}\label{lem4}
Assume that $A:C\to H$ is a L - Lipschitz continuous operator and the bifunction $f:C\times C\to \Re$ is defined by $f(x,y)=\left\langle A(x),y-x\right\rangle$ 
for all $x,y\in C$. Then
\begin{itemize}
\item [$\rm i.$] $f$ is Lipschitz-type continuous on $C$ with $c_1=c_2=L/2$.
\item [$\rm ii.$] $z=\underset{t\in C}{\rm argmin} \{ \lambda f(y,t)+\frac{1}{2}||x-t||^2\}$ if and only if $z=P_C(x-\lambda A(y))$, where $\lambda>0$.
\end{itemize}
\end{lemma}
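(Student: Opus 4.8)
The plan is to verify both claims by direct computation, since each reduces to an elementary manipulation of inner products together with one standard inequality. There is no deep obstacle here: the result is essentially the standard dictionary between the variational-inequality and equilibrium-problem formulations, and the only real care needed is in the bookkeeping of signs and constant terms.

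For part (i), I would start from the definition $f(u,v)=\langle A(u),v-u\rangle$ and compute the combination that appears in the Lipschitz-type condition,
$$f(x,y)+f(y,z)-f(x,z)=\langle A(x),y-x\rangle+\langle A(y),z-y\rangle-\langle A(x),z-x\rangle.$$
Collecting the two terms carrying $A(x)$ gives $\langle A(x),y-z\rangle$, and combining this with $\langle A(y),z-y\rangle=-\langle A(y),y-z\rangle$ collapses the whole expression to $\langle A(x)-A(y),\,y-z\rangle$. From here the Cauchy--Schwarz inequality yields the lower bound $-\|A(x)-A(y)\|\,\|y-z\|$, the $L$-Lipschitz continuity of $A$ upgrades it to $-L\,\|x-y\|\,\|y-z\|$, and the elementary estimate $ab\le\tfrac12(a^2+b^2)$ applied with $a=\|x-y\|$ and $b=\|y-z\|$ gives
$$f(x,y)+f(y,z)-f(x,z)\ge-\tfrac{L}{2}\|x-y\|^2-\tfrac{L}{2}\|y-z\|^2,$$
which is precisely Lipschitz-type continuity with $c_1=c_2=L/2$.

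For part (ii), I would rewrite the objective so that the minimization is manifestly a projection. Since $f(y,t)=\langle A(y),t-y\rangle$ and the term $-\lambda\langle A(y),y\rangle$ is independent of $t$, minimizing $\lambda f(y,t)+\tfrac12\|x-t\|^2$ over $t\in C$ is equivalent to minimizing $\tfrac12\|x-t\|^2+\lambda\langle A(y),t\rangle$. Completing the square produces the identity
$$\tfrac12\|x-t\|^2+\lambda\langle A(y),t\rangle=\tfrac12\big\|t-(x-\lambda A(y))\big\|^2-\tfrac12\|x-\lambda A(y)\|^2+\tfrac12\|x\|^2,$$
in which the last two terms do not depend on $t$. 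Hence the minimizer over $C$ is the point of $C$ nearest to $x-\lambda A(y)$, that is, $z=P_C(x-\lambda A(y))$. As an alternative route one could invoke Lemma~\ref{lem.Equivalent_MinPro} to obtain the optimality condition $0\in\lambda A(y)+(z-x)+N_C(z)$ and then recognise, through the variational characterisation of the projection in Lemma~\ref{lem.PropertyPC}(iii), that this is exactly $z=P_C(x-\lambda A(y))$. The only point to watch is confirming that the discarded terms after completing the square are genuinely constant in $t$, so that the two minimization problems share the same solution.
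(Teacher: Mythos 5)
Your proposal is correct and follows essentially the same route as the paper: part (i) via the identity $f(x,y)+f(y,z)-f(x,z)=\left\langle A(x)-A(y),y-z\right\rangle$ followed by Cauchy--Schwarz, the Lipschitz bound, and $ab\le\tfrac12(a^2+b^2)$; part (ii) by completing the square and discarding terms constant in $t$ to recognise the projection. The only cosmetic difference is how the constant terms are grouped in the completed square, which does not affect the argument.
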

\begin{proof}
i. From the $L$ - Lipschitz continuity of $A$, the Cauchy-Schwarz and Cauchy inequalities, we have
\begin{eqnarray*}
f(x,y)&+&f(y,z)-f(x,z)=\left\langle A(x)-A(y),y-z\right\rangle\ge-||A(x)-A(y)||||y-z||\\
&\ge&-L||x-y||||y-z||\ge-\frac{L}{2}||x-y||^2-\frac{L}{2}||y-z||^2.
\end{eqnarray*}
This implies that $f$ is Lipschitz-type continuous on $C$ with $c_1=c_2=L/2$.\\
ii. From the definition of $f$, we have
 \begin{eqnarray*}
z&=&\underset{t\in C}{\rm argmin} \{ \lambda \left\langle A(y),t-y\right\rangle+\frac{1}{2}||x-t||^2\}\\ 
&=&\underset{t\in C}{\rm argmin} \{\frac{1}{2}||t-(x-\lambda A(y))||^2-\frac{\lambda^2}{2}||A(y)||^2-\lambda \left\langle A(y),y-x\right\rangle\}\\
&=&\underset{t\in C}{\rm argmin} \{\frac{1}{2}||t-(x-\lambda A(y))||^2\}\\
&=&P_C\left(x-\lambda A(y)\right)
\end{eqnarray*}
in which the third equality is followed from the fact that $\underset{t\in C}{\rm argmin}\left\{g(t)+a\right\}=\underset{t\in C}{\rm argmin}\left\{g(t)\right\}$ 
and the last equality is true because of the definition of the metric projection. Lemma \ref{lem4} is proved.
\end{proof}
Thanks to Lemma \ref{lem3}, instead of EP (\ref{eq:EP}) we solve VIP (\ref{VIP}) for the operator $A(x)=\nabla_2 f(x,x)$ onto $C$. 
It is emphasized that $\rm (B2)$ and $\rm (B3)$ are slightly strong conditions. However, in this case, we can use the existing methods for VIPs to solve 
EPs. For instance, using the subgradient extragradient method \cite[Algorithm 3.6]{CGR2011} we obtain the following hybrid algorithm for solving 
EP (\ref{eq:EP})
\begin{equation}\label{CGR2011}
\begin{cases}
y_n=P_{C}(x_n-\lambda \nabla_2 f(x_n,x_n)),\\
z_n=\alpha_n x_n+(1-\alpha_n)P_{T_n}(x_n-\lambda \nabla_2 f(y_n,y_n)),\\
C_n=\left\{z\in H:||z_{n}-z||^2\leq ||x_n-z||^2\right\},\\
Q_n=\left\{z\in H: \left\langle x_0-x_n,z-x_n\right\rangle\le 0\right\},\\
x_{n+1}=P_{C_n\cap Q_n}(x_0),
\end{cases}
\end{equation}
where $T_n=\left\{z\in H:\left\langle x_n-\lambda \nabla_2 f(x_n,x_n)-y_n,z-y_n\right\rangle\le 0\right\}$. If the conditions $\rm (B1)-(B4)$ 
hold for all $x,y \in H$ then $\left\{x_n\right\}$ generated by (\ref{CGR2011}) converges 
strongly to $P_{EP(f,C)}(x_0)$.

In this subsection, we introduce the following strong convergence result.
\begin{theorem}\label{theo3}
Let $C$ be a nonempty closed convex subset of a real Hilbert space $H$. Assume that the bifunction $f$ satisfies all conditions $\rm (B1)-(B4)$ 
such that $EP(f,C)$ is nonempty. Let $\left\{x_n\right\}$ be the sequence generated by the following manner: 
$x_0=x_1 \in H, ~y_0=y_1\in C,~C_0=Q_0=H$ and
\begin{equation}\label{H2015a}
\left \{
\begin{array}{ll}
y_{n+1}=  P_C\left(x_n-\lambda\nabla_2f(y_n,y_n)\right),\\
C_n=\left\{z\in H:||y_{n+1}-z||^2\leq ||x_n-z||^2+\epsilon_n \right\},\\
Q_n=\left\{z\in H: \left\langle x_0-x_n,z-x_n\right\rangle\le 0\right\},\\
x_{n+1}=P_{C_n\cap Q_n}(x_0),
\end{array}
\right.
\end{equation}
where $\epsilon_n, \lambda, k$ are defined as in Algorithm \ref{algor1} with $c_1=c_2=L/2$.  
Then, the sequence $\left\{x_n\right\}$ converges strongly to $P_{EP(f,C)}(x_0)$.
\end{theorem}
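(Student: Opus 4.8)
The plan is to reduce Theorem \ref{theo3} to the main convergence result Theorem \ref{theo.1} by passing to the variational inequality reformulation. Put $A(x)=\nabla_2 f(x,x)$ and define the auxiliary bifunction $g(x,y)=\langle A(x),y-x\rangle$ on $C\times C$. By Lemma \ref{lem3}, the operator $A$ is monotone on $C$ and $EP(f,C)=VI(A,C)$; since $EP(g,C)$ is by definition exactly $VI(A,C)$, we have $EP(g,C)=EP(f,C)$. In particular $EP(g,C)$ is nonempty and the target point $P_{EP(f,C)}(x_0)=P_{EP(g,C)}(x_0)$ is unaffected by the reduction.

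Next I would check that $g$ satisfies the standing hypotheses (A1)--(A4) of Theorem \ref{theo.1}. The monotonicity of $A$ gives $g(x,y)+g(y,x)=-\langle A(x)-A(y),x-y\rangle\le 0$, so $g$ is monotone, hence pseudomonotone, and $g(x,x)=0$; this is (A1). Condition (A2), with $c_1=c_2=L/2$, is precisely Lemma \ref{lem4}(i), which uses the $L$-Lipschitz continuity of $A$ granted by (B3). Condition (A4) is immediate because $y\mapsto g(x,y)$ is affine, hence convex and subdifferentiable. Moreover, Lemma \ref{lem4}(ii) identifies the update $y_{n+1}=P_C(x_n-\lambda\nabla_2 f(y_n,y_n))$ with the strongly convex program $y_{n+1}=\mathrm{argmin}_{y\in C}\{\lambda g(y_n,y)+\frac{1}{2}\|x_n-y\|^2\}$ that constitutes Step 1 of Algorithm \ref{algor1} applied to $g$. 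Since the half-spaces $C_n,Q_n$ and the correction $\epsilon_n$ in $(\ref{H2015a})$ are copied verbatim from Algorithm \ref{algor1} with $c_1=c_2=L/2$, the scheme $(\ref{H2015a})$ is exactly Algorithm \ref{algor1} run on the bifunction $g$. Granting the hypotheses, Theorem \ref{theo.1} then delivers $x_n\to P_{EP(g,C)}(x_0)=P_{EP(f,C)}(x_0)$, which is the assertion.

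The one hypothesis of Theorem \ref{theo.1} that is not inherited automatically is (A3), the weak continuity of $g$: a monotone Lipschitz operator need not be weakly-to-weakly continuous in infinite dimensions, so I expect verifying the limiting step to be the main obstacle. Rather than forcing full weak continuity, I would revisit the single place where (A3) enters the proof of Theorem \ref{theo.1}, namely passing to the limit in $(\ref{eq:15})$ to show that a weak cluster point $p$ of $\{x_n\}$ lies in the solution set, and replace it by a Minty-type argument. Specialising $(\ref{eq:15})$ to $g$ gives $\lambda\langle A(y_n),y-y_{n+1}\rangle\ge\langle x_n-y_{n+1},y-y_{n+1}\rangle$ for all $y\in C$. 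Because $\|x_n-y_{n+1}\|\to 0$ and $\{y_{n+1}\}$ is bounded, the right-hand side tends to $0$; replacing $A(y_n)$ by $A(y_{n+1})$ costs a term bounded by $L\|y_n-y_{n+1}\|\,\|y-y_{n+1}\|$, which tends to $0$ since $A$ is $L$-Lipschitz by (B3) and $\|y_n-y_{n+1}\|\to 0$ along the iteration, so $\liminf_{n\to\infty}\langle A(y_{n+1}),y-y_{n+1}\rangle\ge 0$. Monotonicity of $A$ yields $\langle A(y),y-y_{n+1}\rangle\ge\langle A(y_{n+1}),y-y_{n+1}\rangle$, and letting $n\to\infty$ with $A(y)$ fixed and $y_{n+1}\rightharpoonup p$ gives $\langle A(y),y-p\rangle\ge 0$ for every $y\in C$. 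Finally, the continuity of $A$, already guaranteed by its Lipschitz property, lets one invoke Minty's lemma along the segments $y=p+t(z-p)$ with $t\downarrow 0$ to conclude $\langle A(p),z-p\rangle\ge 0$ for all $z\in C$, i.e. $p\in VI(A,C)=EP(f,C)$. Once $p\in EP(f,C)$ is secured, the remaining part of the proof of Theorem \ref{theo.1} --- boundedness through $(\ref{eq:8*})$, the norm-minimality comparison $\|x_n-x_0\|\le\|x^\dagger-x_0\|$, and the Kadec--Klee property --- transfers without change and gives strong convergence of $\{x_n\}$, and hence of $\{y_n\}$, to $P_{EP(f,C)}(x_0)$.
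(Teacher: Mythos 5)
Your reduction is exactly the one the paper uses: set $F(x,y)=\left\langle A(x),y-x\right\rangle$ with $A(x)=\nabla_2 f(x,x)$, invoke Lemma \ref{lem3} for monotonicity of $A$ and for $EP(F,C)=EP(f,C)$, invoke Lemma \ref{lem4} for the Lipschitz-type constants $c_1=c_2=L/2$ and for identifying the argmin step of Algorithm \ref{algor1} with the projection step of $(\ref{H2015a})$, and then apply Theorem \ref{theo.1} to $F$. Where you part company with the paper is instructive: the paper simply asserts that $F$ satisfies $\rm (A3)$ and $\rm (A4)$ ``automatically,'' whereas you correctly observe that weak continuity $\rm (A3)$ is \emph{not} automatic in infinite dimensions --- it already fails for the simplest monotone Lipschitz operator: with $A=I$ on an infinite-dimensional $H$ and an orthonormal sequence $\left\{e_n\right\}$, one has $F(x,y)=\left\langle x,y-x\right\rangle$, $e_n\rightharpoonup 0$, yet $F(e_n,0)=-1\not\to 0=F(0,0)$. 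Your fix --- rerunning the single step of the proof of Theorem \ref{theo.1} where $\rm (A3)$ enters, namely the passage to the limit in $(\ref{eq:15})$, and replacing it with a Minty-type argument (shift $A(y_n)$ to $A(y_{n+1})$ at cost $L\|y_n-y_{n+1}\|\to 0$, use monotonicity to move $A$ onto the fixed point $y$, pass to the weak limit against the fixed vector $A(y)$, then slide along segments $y=p+t(z-p)$ using the norm continuity of $A$) --- is correct, and it is what makes the theorem genuinely valid in infinite-dimensional $H$; the paper's one-line appeal to $\rm (A3)$ is only safe in finite dimensions, where weak and strong convergence coincide. One small point you leave implicit and could add: the well-definedness of $P_{EP(f,C)}(x_0)$ also routes through $\rm (A3)$ in the paper (closedness and convexity of the solution set), but the same Minty characterization you use shows $VI(A,C)=\left\{x\in C:\left\langle A(y),y-x\right\rangle\ge 0,\ \forall y\in C\right\}$ is an intersection of closed half-spaces with $C$, hence closed and convex, so this too survives without $\rm (A3)$. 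In short, your proof follows the paper's strategy but repairs it precisely at the point where the paper's argument has a gap.
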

\begin{proof}
Set $F(x,y)=\left\langle A(x),y-x\right\rangle$ for all $x,y\in C$, where $A(x)=\nabla_2 f(x,x)$. Lemma $\ref{lem3}$.ii. ensures that $EP(F,C)=EP(f,C)$. 
The bifunction $F$ satisfies the conditions 
$\rm (A3)$ and $\rm (A4)$ automatically. From Lemma \ref{lem3}.i., we see that $F$ is monotone, and so it is also pseudomonotone 
or $F$ satisfies the condition $\rm (A1)$. Lemma \ref{lem4}.i. and $\rm (B3)$ ensure that the condition $\rm (A2)$ holds for the bifunction $F$. 
From Step 1 of Algorithm $\ref{algor1}$ and Lemma \ref{lem4}.ii., $y_{n+1}=  P_C\left(x_n-\lambda\nabla_2f(y_n,y_n)\right)$. Thus, 
Theorem \ref{theo3} is directly followed from Theorem \ref{theo.1} for $f=F$.
\end{proof}
\subsection{Multivalued variational inequalities}
In this subsection, we consider the following multivalued variational inequality problem (MVIP)
\begin{equation}\label{MVIP}
\begin{cases}
\mbox{Find}~x^*\in C ~\mbox{and}~v^*\in A(x^*)~\mbox{such that}\\
\left\langle v^*,y-x^*\right\rangle\ge 0,\forall y\in C, 
\end{cases}
\end{equation}
where $A:C\to 2^H$ is a multivalued compact operator. For a pair $x,y\in C$, we put 
\begin{equation}\label{eq:f}
f(x,y)=\underset{u\in A(x)}{\sup}\left\langle u,y-x\right\rangle.
\end{equation}
It is easy to show that $x^*$ is a solution of MVIP (\ref{MVIP}) if and only if $x^*$ is a solution of EP for the bifunction $f$ on $C$. We recall the following 
definitions.
\begin{definition}
A multivalued operator $A:C\to 2^H$ is said to be: 
\begin{itemize}
\item [$\rm i.$] monotone on $C$ if 
$$\left\langle u-v,x-y\right\rangle\ge 0~\forall x,y\in C,~\forall u\in A(x),~\forall v\in A(y); $$
\item [$\rm ii.$] pseudomonotone on $C$ if 
$$\left\langle u,x-y\right\rangle\ge 0\Longrightarrow \left\langle v,y-x\right\rangle\le 0 ~\forall x,y\in C,~\forall u\in A(x),~\forall v\in A(y); $$
\item [$\rm iii.$] $L$ - Lipschitz continuous if there exists a positive constant $L$ such that 
$$\underset{u\in A(x)}{\sup}\underset{v\in A(y)}{\inf}||u-v||\le L||x-y||~\forall x,y\in C.$$
\end{itemize}
\end{definition}
\begin{remark}
If we denote $h(C_1,C_2)$ by the Hausdorff distance between two sets $C_1$ and $C_2$ then the definition iii. means that
$$h(A(x),A(y))\le L||x-y||~\forall x,y\in C.$$
\end{remark}
We can easily check that if $A$ is pseudomonotone and $L$ - Lipschitz continuous then $f$ is also pseudomonotone and Lipschitz-type 
continuous with two constants $c_1=c_2=L/2$. Note that, when $A$ is singlevalued then Algorithm $\ref{algor1}$ becomes the \textit{hybrid 
algorithm without the extrapolation step} for variational inequalities \cite{MS2015}. When $A$ is multivalued then Algorithm $\ref{algor1}$ 
can be applied for the bifunction $f$ defined by (\ref{eq:f}). A disadvantage of performing Algorithm $\ref{algor1}$ in this case is that it is not 
easy to chose an approximation of the bifunction $f(x,y)$. In fact, we can prove the strong convergence of the following algorithm
\begin{equation}\label{H2015a}
\left \{
\begin{array}{ll}
x_0=x_1 \in H, ~y_0=y_1\in C,\\
y_{n+1}=  P_C\left(x_n-\lambda u_n\right),~u_n\in A(y_n),\\
C_n=\left\{z\in H:||y_{n+1}-z||^2\leq ||x_n-z||^2+\epsilon_n \right\},\\
Q_n=\left\{z\in H: \left\langle x_0-x_n,z-x_n\right\rangle\le 0\right\},\\
x_{n+1}=P_{C_n\cap Q_n}(x_0),
\end{array}
\right.
\end{equation}
where $\epsilon_n, \lambda, k$ are defined as in Algorithm \ref{algor1} with $c_1=c_2=L/2$.  
\section{Numerical examples}\label{Numer}
In this section, we consider two previously known academic numberical examples in Euclidean spaces. The purpose of these experiments is to 
illustrate the convergence of Algorithm $\ref{algor1}$ and compare its efficiency with Algorithms $\ref{VSH2013a}$, $\ref{VSH2013b}$ 
and $\ref{DHM2014}$. Of course, there are many mathematical models for EPs in infinite dimensional Hilbert spaces, see, for instance 
\cite{BO1994} and the norm convergence of algorithms is more necessary than the weak convergence. The ability of the implementation of these 
algorithms has been discussed in Sections $\ref{intro}$ and \ref{algor}. Note that Algorithm $\ref{VSH2013b}$, in general, is difficult to compute numerical 
experiments because of the complexity of the sets $C_n$. However, in the examples below, the feasible set $C$ is a polyhedron expressed by 
$Ax\le b$, where $A$ is a matrix, $b$ is a vector. Thus, from the definition of $C_n$ in Algorithm $\ref{VSH2013b}$, we see that it
is also a polyhedron and $C_0=\left\{x:A_0x\le b_0\right\}$ with $A_0=A,~b_0=b$. After that $C_{n+1}=\left\{x:A_{n+1}x\le b_{n+1}\right\}$ 
can be sequentially constructed by adding a linear inequality constraint to the set of constraints of $C_n$. This is performed in MATLAB version 
7.0 and the number of constraints increases when $n$ increases. The sets $C_n, Q_n$ in Algorithms $\ref{VSH2013a}$ and $\ref{DHM2014}$ 
are simply constructed more at each step. While the sets $C_n,~Q_n$ in Algorithm $\ref{algor1}$ are two half-spaces, so we use the explicit formula 
in \cite{CH2005,SS2000} to compute $x_{n+1}$. All convex quadratic optimization programs and the projections on polyhedrons can solved easily 
by the MALAB Optimization Toolbox where the projections are equivalently rewriten to the distance optimization programs. The algorithms are performed on a 
PC Desktop Intel(R) Core(TM) i5-3210M CPU @ 2.50GHz 2.50 GHz, RAM 2.00 GB. For a given tolerance $TOL$, we compare numbers of iterates (Iter.) 
and execution time (CPU in sec.) of mentioned algorithms above with chosing different starting points.

\textit{Example 1.} We consider the bifunction $f:C\times C\to \Re$ in $\Re^{2}$ proposed in \cite[Example 3]{MPPP2012} as 
$f(x,y)=(x_1+x_2-1)(y_1-x_1)+(x_1+x_2-1)(y_2-x_2)$ and the feasible set $C=[0,1]\times[0,1]$. It is easy to show that $f$ 
is monotone (so pseudomonotone) and Lipschitz-type continuous with $c_1=c_2=1$. The solution set of EP for $f$ on $C$ is 
$EP(f,C)=\left\{x\in C:x_1+x_2-1=0\right\}$. 
In this example, for a starting point $x_0$ then the sequence $\left\{x_n\right\}$ generated by Algorithms 
$\ref{algor1}$, $\ref{VSH2013a}$, $\ref{VSH2013b}$ and $\ref{DHM2014}$ converges strongly to 
$x^\dagger:=P_{EP(f,C)}(x_0)$ which is easily known because $EP(f,C)$ is explicit. The termination criterion in all algorithms 
is $||x_n-x^\dagger||\le TOL=0.001$.
The parameters are chosen as follows $\lambda=0.2,~k=6,~\eta=0.5$. In Algorithm $\ref{algor1}$, we chose 
$x_1=x_0,~y_0=y_1=(0,0)^T$. The results are shown in Table $\ref{tab:1}$.

\begin{table}[ht]\caption{Results for given starting points in \textit{Example 1}.}\label{tab:1}
\medskip\begin{center}
\begin{tabular}{|c|c|c|c|c|c|c|c|c|}
\hline
 $x_0$& \multicolumn{4}{c|}{Iter.} &\multicolumn{4}{c|}{CPU in sec.}
\\ \cline{2-9}
  & Alg. $\ref{algor1}$ &Alg. $\ref{VSH2013a}$&Alg. $\ref{VSH2013b}$&Alg. $\ref{DHM2014}$& Alg. $\ref{algor1}$ &Alg. $\ref{VSH2013a}$&Alg. $\ref{VSH2013b}$&Alg. $\ref{DHM2014}$ \\ \hline
(2,5)&57&123&57&68&1.57 &2.58&1.59&1.98 \\ \hline
(5,5)&51&95&55&65& 1.50&1.58&1.54&1.46 \\ \hline
(4,4.5)&54&97&64&65& 1.58&1.98&1.80&2.12 \\ \hline
(-0.75,0)&54&98&59&65& 1.57&2.04&1.75&1.90 \\ \hline
 \end{tabular}
\end{center}
\end{table}

\textit{Example 2.} We consider the pseudomonotone bifunction $f$ which comes from the Nash-Cournot equilibrium model in \cite{QMH2008,SVH2011}. 
It is defined by 
$$ f(x,y)=\left\langle Px+Qy+q,y-x\right\rangle, $$
where $q\in \Re^5$, $P,~Q\in \Re^{5\times 5}$ are two matrices of order 5 such that $Q$ is symmetric, positive semidefinite and $Q-P$ is 
negative semidefinite. The feasible set $C$ is a polyhedral convex set defined by 
$$ C=\left\{x\in \Re^5:\sum_{i=1}^5 x_i\ge -1,~-5\le x_i\le 5,~i=1,\ldots,5\right\}. $$
This example is tested with $q=(1,-2,-1,2,-1)^T$,
$$
    P =
    \left(\begin{array}{ccccc}
     3.1&2&0&0&0 \\
     2&3.6&0&0&0\\
0&0&3.5&2&0\\
0&0&2&3.5&0\\
0&0&0&0&3
    \end{array}\right),~
Q =
    \left(\begin{array}{ccccc}
     1.6&1&0&0&0 \\
     1&1.6&0&0&0\\
0&0&1.5&1&0\\
0&0&1&1.5&0\\
0&0&0&0&2\\
    \end{array}\right).
$$
Two Lipschitz-type constants are $c_1=c_2=||P-Q||/2$. The parameters are $\lambda=\frac{1}{5c_1},~k=6,~\eta=0.5$. 
We also chose $x_1=x_0,~$ and $y_1=y_0=(0,0,0,0,0)^T$ in Algorithm $\ref{algor1}$. In this example, the exact solution 
is not known. Thus, the stopping criterion is used in the algorithms as $||y_n-x_n||\le TOL=0.001$. Table $\ref{tab:2}$ shows 
the results for chosing different starting points as $ x_0^1=(1,3,1,1,2)^T$, $ x_0^2=(-3,4,1,-5,6)^T$, $ x_0^3=(3,-2,1,9,-8)^T $, 
$ x_0^4=(-2,3,-1,8,8)^T $.

\begin{table}[ht]\caption{Results for given starting points in \textit{Example 2}.}\label{tab:2}
\medskip\begin{center}
\begin{tabular}{|c|c|c|c|c|c|c|c|c|}
\hline
 $x_0$&\multicolumn{4}{c|}{Iter.} &\multicolumn{4}{c|}{CPU in sec.}
\\ \cline{2-9}
  &Alg. $\ref{algor1}$ &Alg. $\ref{VSH2013a}$&Alg. $\ref{VSH2013b}$&Alg. $\ref{DHM2014}$& Alg. $\ref{algor1}$ &Alg. $\ref{VSH2013a}$&Alg. $\ref{VSH2013b}$&Alg. $\ref{DHM2014}$ \\ \hline
$x_0^1$&225&1022&958&798 &7.10&24.33&23.53&47.36 \\ \hline
$x_0^2$&369&1666&1021&1187&11.25 &34.92&33.01&44.71 \\ \hline
$x_0^3$&435&2854&923&1226&15.22 &66.53&39.99&42.13 \\ \hline
$x_0^4$&411&2385&1501&1228&12.01&37.55&41.54 &64.01 \\ \hline
 \end{tabular}
\end{center}
\end{table}
Although, the study of the numerical examples here is preliminary and it is clear that EP depends on the structure of the feasible set $C$ and the bifunction $f$. 
However, the results in Tables $\ref{tab:1}$ and $\ref{tab:2}$ show the convergence of our proposed algorithm and compare its efficiency with the others.
\section{Concluding remarks}
The paper proposes a novel algorithm for solving EPs for a class of pseudomonotone and Lipschitz-type continuous bifunctions. By constructing the specially cutting halfspaces, 
we have designed the algorithm without the extra-steps. This is the reason which explains why our algorithm can be considered as an improvement of  some previously known algorithms. 
The strong convergence of the algorithm is proved and its efficiency is illustrated by some numerical experiments.
It is also emphasized that we still have to solve exactly an optimization problem in each step. This, in general, is a disadvantage of the algorithm 
(also, of the extragradient methods and the Armijo linesearch methods) when 
equilibrium bifunctions and feasible sets have complex structures. However, contrary to several previous algorithms, our algorithm does not only avoid 
using the extra-steps which, in general, are inherently costly but also is numerically easer at its last step because the projection is only performed onto the 
intersection of two half-spaces. The paper also help us in the design and analysis of more practical algorithms to be seen. Finally, it seems to be that the 
algorithm also has competitive advantage.

\end{document}